\DeclareMathAlphabet{\mathpzc}{OT1}{pzc}{m}{it}
\DeclareMathOperator{\spec}{Spec}
\DeclareMathOperator{\rk}{rk}
\DeclareMathOperator{\num}{Num}
\DeclareMathOperator{\pic}{Pic}
\DeclareMathOperator{\sym}{Sym}
\newtheorem{thm}{Theorem}[section] 
\newtheorem{cor}[thm]{Corollary}
\newtheorem{lem}[thm]{Lemma} 
\newtheorem{prop}[thm]{Proposition}
\newtheorem{dfn}[thm]{Definition}
\theoremstyle{definition}
\theoremstyle{remark}
\newtheorem{rmk}[thm]{Remark}
\newtheorem{ex}[thm]{Example}
\numberwithin{equation}{section}
\newcommand{\blu}[1]{\textcolor{blue}{#1}}
 \title[Moduli of rank 2 Higgs sheaves  on elliptic surfaces]{Moduli of rank 2 Higgs sheaves \\[5pt] on elliptic surfaces}
\author{Ugo Bruzzo$^{\S\ast}$  and Vitantonio Peragine$^{\P}$}
\address{\small 
$^\S$Departamento de Matem\'atica, Universidade Federal da Para\'iba,  \\ Campus I, Jo\~ao Pessoa, PB, Brazil\\
$^\S$INFN (Istituto Nazionale di Fisica Nucleare), Sezione di Trieste, Italia\\
$^\S$IGAP (Institute for Geometry and Physics), Trieste, Italia\\
$^\S$Arnold-Regge Center for Algebra, Geometry   and Theoretical Physics, Torino, Italia \\[3pt] $^\P$ Scala al Monticello 4, 34126, Trieste, Italia \\[3pt] E-Mail: {\tt ugo.bruzzo@academico.ufpb.br, vitantonio.peragine@gmail.com}}
\date{October 6th, 2021} 
\subjclass[2010]{14F05, 14H60, 14J27, 14J60}
\keywords{Semistable Higgs sheaves, elliptic surfaces.
}
\thanks{U.B.'s research is partly supported by PRIN ``Geometria delle variet\`a algebriche", by Bolsa de Produtividade 313333/2020-3 of Brazilian CNPq, and INdAM-GNSAGA. He 
  is a member of the VBAC group. \\ \indent $^\ast$  On leave of absence from SISSA (Scuola Internazionale Superiore di Studi Avanzati), Trieste, Italia.} 
\begin{document} 
\begin{abstract} 
We study torsion-free, rank 2 Higgs sheaves on genus one fibered surfaces, (semi)stable with respect to suitable polarizations in the sense of Friedman and O'Grady. We prove that slope-semistability of a Higgs sheaf on the surface implies semistability on the generic fiber. In the case of Higgs sheaves of odd fiber degree on elliptic surfaces in characteristic $\neq 2$, we prove that any moduli space of Higgs sheaves with fixed numerical invariants splits canonically as the product of the moduli space of ordinary sheaves (with the same invariants), and the space of global regular $1$-forms on the surface. For elliptic surfaces with section in characteristic zero, and in the case arbitrary fiber degree, we prove that if a Higgs sheaf has reduced Friedman spectral curve, or is regular on a general fiber with non-reduced spectral cover, then its Higgs field takes values in the saturation of the pull-back of the canonical bundle of the base curve in the cotangent bundle of the surface.
\end{abstract}

\maketitle  
\setcounter{tocdepth}{1}
\thispagestyle{empty}


 \section{Introduction}
In a previous paper \cite{primo} we studied Higgs bundles $(V,\phi)$ on a class of elliptic surfaces $\pi:X\to B$, whose underlying vector bundle $V$ has vertical determinant and is fiberwise semistable. We proved that if the spectral curve\footnote{This 
is      Friedman's notion of {\em spectral curve}, see \cite{libro friedman}.}  of $V$ is reduced, then the Higgs field $\phi$ is \emph{vertical}, while if the bundle $V$ is fiberwise regular with reduced (resp., integral) spectral curve,
 and if its rank and second Chern number satisfy an inequality involving the genus of $B$ and the degree of the fundamental line bundle of $\pi$ (resp., if the fundamental line bundle is sufficiently ample), then $\phi$ is \emph{scalar}. These results were applied to the problem of characterizing slope-semistable Higgs bundles with vanishing discriminant on the class of elliptic surfaces considered, in terms of the semistability of their pull-backs via maps from arbitrary (smooth, irreducible, complete) curves to $X$; i.e., we partly established, for the class of elliptic surfaces considered, the conjecture
about Higgs bundles satisfying the last mentioned condition that was stated in \cite{BG1} and was studied in
\cite{BHR,BG1,BLL,BL,BC}.

In this paper we continue this study, mostly working in a slightly more general setting,  namely, we assume that
$\pi:X\to B$ is a {\em genus one fibered surface}, which means that the generic fiber $X_\eta$ is a genus one curve over $K$ (the function field of the generic point of $B$) which we do not assume to be smooth. 

In Section \ref{sec:suitable}, following \cite{friedman art, kieran}, whenever $S$ is  an integral, regular, complete surface, and
$c=(r,c_1,c_2)$ is a triple in $\Gamma_S=\mathbb{Z}\times\num(S)\times\mathbb{Z}$, we give a notion of {\em $c$-suitable polarization}.
We recall from \cite{huy} the following result: if   $c\in\Gamma_X$ is a numerical class, and $H$ a $c$-suitable polarization, then, for every torsion-free sheaf $F$ of class $c$ on $X$,   if $F$ is $\mu_H$-semistable, then $F_\eta$ is semistable;
if $F_\eta$ is stable, then $F$ is $\mu_H$-stable.  Here $F_\eta$ is the restriction of $F$ to the fiber of $X$ over the generic point $\eta$ of $B$.
Section \ref{sec:Higgs} is basically devoted to the extension of this result to rank 2 Higgs sheaves (Proposition \ref{prop:3.3}) on genus one fibered surfaces.

Section \ref{sec:odd} considers rank 2 Higgs sheaves of odd fiber degree over genus one fibered surfaces. A first result somehow generalizes to these surfaces what Franco et al.~proved for elliptic curves \cite{franco-et-al}, namely, the underlying sheaf of a semistable Higgs sheaf is stable, both for slope and Gieseker stability. This implies that the moduli spaces of semistable ordinary or Higgs sheaves coincide with the corresponding moduli spaces of {\em stable} objects. These identifications are established as isomorphisms of moduli schemes. Moreover,
there is a surjective scheme morphism of  moduli spaces $M_{\mathrm{Higgs}}(c)\to M(c)$ (hopefully the meaning of symbols is clear,
anyway they will be defined in Section \ref{sec:odd}). 

Another result in this Section is the following.
Assume the ground field $k$ is  of characteristic $\neq 2$, and the fibration $\pi:X\to B$ is elliptic and non-isotrivial. Let $F$ be a $\mu_H$-semistable, torsion-free sheaf   on $X$. Then any Higgs field on $F$ is necessarily {\em scalar}, i.e., it is given by the tensor product by a 1-form on $X$. This implies that the moduli space of Higgs sheaves with fixed numerical invariants splits canonically as the product of the moduli space of ordinary sheaves with the same invariants, and the space on global $1$-forms on the surface; in particular, it is smooth.

Section \ref{sec:arb} deals with   sheaves of arbitrary degree. The main result is as follows. Given an elliptic surface  $\pi:X\to B$ with a section and a torsion-free Higgs sheaf $(F,\phi)$  of  arbitrary rank  on $X$, one assumes that (1)  the restriction of $F$ to the generic fiber of $\pi$  is slope-semistable; (2) the Friedman spectral curve of $F$ is reduced; (3)
 the schemes of singularities of $F$ and $(\Omega_\pi)_{\textrm{t.f.}}$, where $\Omega_\pi$ is the relative cotangent sheaf,  are disjoint.
Then the Higgs field $\phi$ takes values in $\mathcal{S}_\pi$, the saturation of the pullback of the cotangent sheaf of $B$ in the
cotangent sheaf of $X$. This generalizes Corollary 4.3 in \cite{primo}. We also show that the same result holds if
the spectral curve is non-reduced when the rank is 2.

\bigskip
\section{Suitable polarizations on fibered surfaces}\label{sec:suitable}
In this section we work in the category of $k$-schemes, where $k$ is an arbitrary field. We want to recall the notion of suitable polarization on a fibered surface.

\subsection{Fibered surfaces}

A \emph{fibered surface} is a pair
$$
(X,\pi:X\to B),
$$
where $X$ (the \emph{total space}) and $B$ (the \emph{base}) are integral, regular, complete schemes of dimension $2$ and $1$, respectively, and $\pi$ (the \emph{projection}) is a surjective morphism whose generic fiber is regular and geometrically connected. As usual, we will denote by $X_b$ the fiber of $\pi$ over a point $b\in B$. Moreover, we will denote by $\eta$ the generic point of the base $B$, and by $K=\mathcal{O}_{B,\eta}=\kappa(\eta)$ its function field; thus $X_\eta$ is the \emph{generic fiber} of $\pi$. 

Let $\pi:X\to B$ be a fibered surface. It follows from the definition that $\pi$ is a proper and flat morphism; in particular, the generic fiber $X_\eta$ is a proper curve over $K$. Its genus is, by definition, the \emph{genus of the fibration}. Unless otherwise explicitly stated, we will not assume $X_\eta$ to be smooth over $K$, that is, if we denote by $\bar{K}$ an algebraic closure of $K$, then the \emph{general geometric fiber} $X_{\bar{\eta}}:=X_\eta\times_K\bar{K}$ of $\pi$ might well be singular. As an example, a genus one fibration is said to be \emph{elliptic} if its generic fiber is smooth over $K$; otherwise it is said to be \emph{quasi-elliptic}. In the latter case, by a theorem of J. Tate, the characteristic of the ground field $k$ is necessarily $2$ or $3$, and $X_{\bar{\eta}}$ is a cuspidal cubic over $\bar{K}$.

Let $\pic(X)$ be the Picard group of $X$, and $\num(X)$ its quotient by the subgroup consisting of isomorphism classes of invertible sheaves numerically equivalent to zero. Then the image in $\mathrm{Num}(X)$ of a closed fiber $X_b\hookrightarrow X$ of $\pi$ is independent on the choice of the point $b\in B(k)$; we will call it the \emph{fiber class of $\pi$}, and denote it by $\mathfrak{f}_\pi$, or just $\mathfrak{f}$.

\subsection{Suitable polarizations}

Let $S$ be an integral, regular, complete surface. The \emph{intersection form} on $\pic(S)$ and on $\num(S)$ will be denoted by $
\lambda\otimes\mu\mapsto(\lambda\cdot\mu)$. The \emph{self-intersection} of a class $\lambda$ will be shortened to $\left(\lambda^2\right)$. We also set
$$
\Gamma_S:=\mathbb{Z}\times\num(S)\times\mathbb{Z}.
$$
Elements $c\in\Gamma_S$ will be written as
$$
c=(r,c_1,c_2);
$$
in fact, for any coherent sheaf $F$ on $S$, one gets a well defined element $\mathrm{c}(F)$ of $\Gamma_S$, which we call the \emph{(numerical) class of $F$}, by setting
$$
\mathrm{c}(F):=\left(\rk F,\det F,\deg_S\left(\mathrm{c}_2(F)\cap[S]\right)\right).
$$
Here $\rk F$ is the \emph{rank} of $F$, $\det F\in\mathrm{Pic}(S)$ its \emph{determinant}, and $\deg_S\left(\mathrm{c}_2(F)\cap[S]\right)$ the \emph{degree} of the \emph{second Chern class} of $F$ (i.e., the \emph{second Chern number} of $F$).

For $c=(r,c_1,c_2)\in\Gamma_S$, we set 
$$
\Delta(c)=2rc_2-(r-1)\left(c_1^2\right)\in\mathbb{Z}.
$$
Thus, for a coherent sheaf $F$ of class $c$ on $S$, the integer $\Delta(c)$ is the \emph{Bogomolov number} of $F$, that is, the degree of the \emph{discriminant} of $F$.

Now let $\pi:X\to B$ be a fibered surface, with fiber class $\mathfrak{f}_\pi$. The \emph{fiber degree} of a line bundle $\lambda$ on $X$ (or of a class $\lambda\in\num(X)$) is the intersection number
$$
\deg_\pi(\lambda):=(\lambda\cdot\mathfrak{f}_\pi)\in\mathbb{Z}.
$$
In particular, for a coherent sheaf $F$ on $X$, the integer
$$
\deg_\pi(F):=\deg_\pi(\det F)
$$
will be called the \emph{fiber degree of $F$}. The choice of terminology is due to the fact that for $b\in B$ general, one has the equality
$$
\deg_\pi(F)=\deg_{X_b}(F_b),
$$
where, as usual, $F_b:=F\otimes_{\mathcal{O}_X}\mathcal{O}_{X_b}$ is the pull-back of $F$ to $X_b$ along the canonical morphism $X_b\to X$, and 
$$
\deg_{X_b}(F_b):=\chi(X_b,F_b)-\mathrm{rk}(F_b)\chi(X_b,\mathcal{O}_{X_b})$$
is the degree of the coherent sheaf $F_b$ on the integral, complete $\kappa(b)$-curve $X_b$.

Following \cite{friedman art, kieran} we introduce a class of polarizations  on fibered surfaces that enjoy
particularly nice properties.

\begin{dfn}
Let $\pi:X\to B$ be a fibered surface with fiber class $\mathfrak{f}_\pi$, and let $c=(r,c_1,c_2)\in \Gamma_X$ be a numerical class. A polarization $H$ on $X$ is said to be \emph{$c$-suitable} if for all $\xi\in\num(X)$ satisfying
$$
-\frac{r^2}{4}\Delta(c)\leqslant \left(\xi^2\right)\qquad\mbox{and}\qquad (\xi\cdot\mathfrak{f}_\pi)\neq 0,
$$
one has
$$
(\xi\cdot H)(\xi\cdot\mathfrak{f}_\pi)>0.
$$
\end{dfn}

The next Proposition shows that $c$-suitable polarizations exist for any class $c$; a proof can be found, e.g., in the book \cite{huy}.

\begin{prop}
Let $X\to B$ be a fibered surface with fiber class $\mathfrak{f}$, and let $c\in\Gamma_X$ be a numerical class. Then, for any polarization $H_0$ on $X$, the class
$$
H_n:=H_0+n\mathfrak{f}\in\num(X)
$$
is a $c$-suitable polarization for all sufficiently big $n\in\mathbb{Z}$.
\end{prop}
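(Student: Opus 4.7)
The plan is to combine the easy fact that $H_n$ remains ample for $n\ge 0$ with a Hodge--index estimate controlling $(\xi\cdot H_0)(\xi\cdot\mathfrak{f})$ from below uniformly in $\xi$. First, note that $H_n$ is a polarization for every $n\ge 0$: $H_0$ is ample while $\mathfrak{f}$ is nef (one has $\mathfrak{f}^2=0$, and $\mathfrak{f}\cdot C$ is either $0$ when $C$ is contracted by $\pi$ or equals $\deg(\pi|_C)>0$ otherwise), so $H_n^2\ge H_0^2>0$ and $H_n\cdot C\ge H_0\cdot C>0$ for every irreducible curve $C\subset X$; Nakai--Moishezon then gives ampleness. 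Thus only the suitability inequality needs verification.

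Expanding,
\[
(\xi\cdot H_n)(\xi\cdot\mathfrak{f}) \;=\; (\xi\cdot H_0)(\xi\cdot\mathfrak{f}) + n\,(\xi\cdot\mathfrak{f})^2,
\]
and since $\xi\cdot\mathfrak{f}\in\mathbb{Z}\setminus\{0\}$ we have $(\xi\cdot\mathfrak{f})^2\ge 1$, so the second summand is at least $n$. It therefore suffices to bound the first summand below by a constant depending only on $c$ and $H_0$. For this I would invoke the Hodge index theorem: since $H_0\cdot\mathfrak{f}>0$ and $\mathfrak{f}^2=0$, the plane $\langle H_0,\mathfrak{f}\rangle\subset\num(X)_{\mathbb{R}}$ is hyperbolic, so its orthogonal complement carries a negative semidefinite intersection form. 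Writing $\xi=\alpha H_0+\beta\mathfrak{f}+w$ with $w$ orthogonal to that plane (so $w^2\le 0$) and using $\xi\cdot\mathfrak{f}=\alpha(H_0\cdot\mathfrak{f})$ together with the expansion of $\xi^2$, a short manipulation yields
\[
(\xi\cdot H_0)(\xi\cdot\mathfrak{f}) \;\ge\; \frac{(H_0\cdot\mathfrak{f})\,\xi^2}{2} + \frac{H_0^2\,(\xi\cdot\mathfrak{f})^2}{2\,(H_0\cdot\mathfrak{f})}.
\]

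Combining this with $\xi^2\ge -\frac{r^2}{4}\Delta(c)$ and $(\xi\cdot\mathfrak{f})^2\ge 1$, and then adding $n(\xi\cdot\mathfrak{f})^2\ge n$, one obtains
\[
(\xi\cdot H_n)(\xi\cdot\mathfrak{f}) \;\ge\; -\frac{(H_0\cdot\mathfrak{f})\,r^2\,\Delta(c)}{8} + \frac{H_0^2}{2\,(H_0\cdot\mathfrak{f})} + n,
\]
which is strictly positive for all $n$ larger than an explicit constant depending only on $H_0$, $\mathfrak{f}$, $r$ and $\Delta(c)$, uniformly in $\xi$. The main obstacle is the Hodge--index inequality above: a priori the ratio $|\xi\cdot H_0|/|\xi\cdot\mathfrak{f}|$ could be unbounded as $\xi$ ranges over the admissible classes, and it is precisely the combination of the lower bound $\xi^2\ge -\frac{r^2}{4}\Delta(c)$ with the hyperbolic-plane decomposition that keeps this ratio under control.
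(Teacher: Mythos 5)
Your argument is correct: the decomposition $\xi=\alpha H_0+\beta\mathfrak{f}+w$ with $w$ in the (negative semidefinite) orthogonal complement of the hyperbolic plane $\langle H_0,\mathfrak{f}\rangle$ gives exactly $(\xi\cdot H_0)(\xi\cdot\mathfrak{f})=\tfrac{(H_0\cdot\mathfrak{f})(\xi^2)}{2}+\tfrac{(H_0^2)(\xi\cdot\mathfrak{f})^2}{2(H_0\cdot\mathfrak{f})}-\tfrac{(H_0\cdot\mathfrak{f})(w^2)}{2}$, so your lower bound, the uniform estimate in $n$, and the Nakai--Moishezon check of ampleness of $H_n$ all go through. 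The paper itself omits the proof and refers to Huybrechts--Lehn, and your Hodge-index argument is essentially the standard one given there, so there is nothing further to reconcile.
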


\subsection{Stability with respect to suitable polarizations}

The next result, for a proof of which we again refer the reader to \cite{huy}, clarifies the usefulness of suitable polarizations. Let us begin by recalling the notion of slope-stability for torsion-free sheaves: let $(Y,H)$ be an integral, regular, polarized $k$-scheme, and let $F$ be a torsion-free sheaf on $Y$\footnote{That is, the stalk $F_y$ of $F$ at any point $y\in Y$ is a torsion-free $\mathcal{O}_{Y,y}$-module.}. 
Then the \emph{slope}  of $F$ with respect to the polarization $H$ is the rational number
$$
\mu_H(F):=\frac{\deg_H(F)}{\rk F},
$$
where
$$
\deg_H(F):=\deg_Y\left(\mathrm{c}_1(F)\cup H^{\dim(Y)-1}\cap[Y]\right)\in\mathbb{Z}
$$
is the \emph{$H$-degree of $F$},
and $F$ is said to be \emph{slope-semistable with respect to $H$}, or, more concisely, $\mu_H$-semistable, if, for each non-zero submodule $S$ of $F$, the inequality
$$
\mu_H(S)\leqslant\mu_H(F)
$$
holds; if, instead, one has the strict inequality
$$
\mu_H(S)<\mu_H(F)
$$
for all non-zero, proper submodules $S$ of $F$, then $F$ is said to be \emph{slope-stable with respect to $H$}, or $\mu_H$-stable. If $Y$ is a \emph{curve}, slope-(semi)stability will be understood with respect to the polarization defined by an arbitrary closed point of $Y$ (different closed points yield the same polarization). Moreover, in this case, the previous definitions make sense, using the appropriate notion of degree, even 
when $Y$ is singular.

\begin{prop}
\label{semistab wrt suitable}
Let $\pi:X\to B$ be a fibered surface. In addition, let $c\in\Gamma_X$ be a numerical class, and $H$ a $c$-suitable polarization. Then, for every torsion-free sheaf $F$ of class $c$ on $X$, the following implications hold:
\begin{enumerate}
\item if $F$ is $\mu_H$-semistable, then $F_\eta$ is semistable;
\item if $F_\eta$ is stable, then $F$ is $\mu_H$-stable.
\end{enumerate}

\end{prop}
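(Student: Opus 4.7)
My plan is to prove both implications by contradiction, in each case attaching to a rank $s$ saturated subsheaf $G\subset F$ (with $0<s<r$ and torsion-free quotient $Q=F/G$) the class
\[
\xi \;:=\; r\,c_1(G)-s\,c_1(F)\ \in\ \num(X).
\]
A direct computation gives the key identities
\[
(\xi\cdot H)=rs\bigl(\mu_H(G)-\mu_H(F)\bigr),\qquad (\xi\cdot \mathfrak{f}_\pi)=rs\bigl(\mu(G_\eta)-\mu(F_\eta)\bigr),
\]
so each factor in the product $(\xi\cdot H)(\xi\cdot\mathfrak{f}_\pi)$ records exactly one type of (de)stabilization.

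For (1), if $F_\eta$ fails to be semistable, I would pick a subsheaf $G_\eta\subset F_\eta$ with $\mu(G_\eta)>\mu(F_\eta)$ and take $G\subset F$ to be the saturation of its schematic closure; then $(\xi\cdot\mathfrak{f}_\pi)>0$ and, by $\mu_H$-semistability of $F$, $(\xi\cdot H)\le 0$. For (2), if $F$ fails to be $\mu_H$-stable, a saturated destabilizing $G$ gives $(\xi\cdot H)\ge 0$, whereas stability of $F_\eta$ applied to the proper nonzero subsheaf $G_\eta$ forces $(\xi\cdot\mathfrak{f}_\pi)<0$. In both cases one ends up with $(\xi\cdot H)(\xi\cdot\mathfrak{f}_\pi)\le 0$ together with $(\xi\cdot\mathfrak{f}_\pi)\ne 0$. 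Provided one can verify the companion hypothesis
\[
-\tfrac{r^2}{4}\Delta(c)\ \le\ \xi^2,
\]
this contradicts the defining property of a $c$-suitable polarization, and both assertions follow.

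The real content is thus the numerical bound $\xi^2\ge -\tfrac{r^2}{4}\Delta(c)$. I would derive it from the standard formula expressing $\Delta(F)$ in terms of $\Delta(G)$, $\Delta(Q)$ and $\xi^2/\bigl(s(r-s)\bigr)$ coming from the exact sequence $0\to G\to F\to Q\to 0$, combined with Bogomolov's inequality $\Delta(\cdot)\ge 0$ applied to the $\mu_H$-semistable Jordan--H\"older/Harder--Narasimhan factors of $G$ and $Q$. After bounding these terms and using the elementary inequality $s(r-s)\le r^2/4$, the required estimate follows.

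The main obstacle is precisely this last Bogomolov-type step: over an arbitrary ground field the bare inequality $\Delta\ge 0$ for $\mu_H$-semistable sheaves may fail in positive characteristic, so one must either restrict the class of destabilizing subsheaves to those appearing in an iterated Harder--Narasimhan filtration (where the bound can be made unconditional) or invoke Langer's characteristic-free refinement. Once this numerical input is secured, both parts of Proposition~\ref{semistab wrt suitable} follow immediately from the definition of a $c$-suitable polarization as laid out above.
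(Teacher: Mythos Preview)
The paper does not supply its own proof of this proposition; it simply refers the reader to \cite{huy}. Your outline is essentially the standard argument one finds there: attach $\xi=r\,c_1(G)-s\,c_1(F)$ to a saturated subsheaf, interpret $(\xi\cdot H)$ and $(\xi\cdot\mathfrak{f}_\pi)$ as (rescaled) slope differences on $X$ and on $X_\eta$, and then invoke the definition of $c$-suitability once the bound $\xi^2\ge-\tfrac{r^2}{4}\Delta(c)$ is in hand. You have correctly isolated this inequality as the real content and the Bogomolov step as the delicate point over fields of positive characteristic, where Section~\ref{sec:suitable} is nominally set.

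Two remarks on the execution. First, your Bogomolov input must be applied with some care: neither $G$ nor $Q$ is a priori $\mu_H$-semistable, and in part~(2) not even $F$ is; so one has to refine the argument via the Harder--Narasimhan factors (as you indicate parenthetically) and keep track of the iterated discriminant identity. This is routine but deserves to be written out rather than asserted. Second, for rank~$2$---the only case the paper actually uses downstream---the bound is elementary and characteristic-free: writing $G\simeq\alpha\otimes\mathcal{I}$ and $Q\simeq\beta\otimes\mathcal{J}$ with $\alpha,\beta$ invertible and $\mathcal{I},\mathcal{J}$ ideal sheaves of zero-dimensional subschemes, one has $\Delta(G),\Delta(Q)\ge 0$ trivially (no Bogomolov needed), and the discriminant identity for the short exact sequence yields $\xi^2+\Delta(c)=2\Delta(G)+2\Delta(Q)\ge 0$ directly. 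The paper carries out exactly this computation, attributed to \cite{huy}, inside the proof of Proposition~\ref{gen}.
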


We recall that the semistability of $F_\eta$ is equivalent to the existence of a dense open subscheme $U$ of $B$ such that the pull-back $F_b$ of $F$ to the curve $X_b$ is a semistable sheaf on $X_b$ for all closed points $b\in U$, while the stability of $F_\eta$ is equivalent to the existence of a closed point $b\in B$ such that the restriction $F_b$ is stable. Thus  Proposition \ref{semistab wrt suitable} says that, for a torsion-free sheaf $F$ on a fibered surface $X\to B$, the semistability of $F$ with respect to a $\mathrm{c}(F)$-suitable polarization implies that the restrictions of $F$ to almost all (i.e., all but  finite number of) closed fibers are themselves semistable; and that the existence of a single closed point $b\in B$ such that the restriction of $F$ to the fiber over $b$ is stable, is enough to guarantee the stability of $F$ with respect to any $\mathrm{c}(F)$-suitable polarization. We remark that the reverse implication in $(1)$ is \emph{false}. In fact, a torsion-free sheaf $F$ on $X$ might very well be (strictly) semistable on all integral closed fibers of $X\to B$, and yet be unstable (that is, non-semistable) with respect to every $\mathrm{c}(F)$-suitable polarization.

\bigskip
\section{Rank 2 Higgs sheaves on genus one fibered surfaces}\label{sec:Higgs}

In this section we keep working in the category of $k$-schemes, where $k$ is a field. We will show that Proposition \ref{semistab wrt suitable} can be generalized to the case of \emph{rank $2$ Higgs sheaves} on \emph{genus one} fibered surfaces. In the proof we shall need a slight generalization of a result from \blu{\cite{BL},}  stating that for torsion-free Hitchin pairs, with values in a slope-semistable locally free sheaf of non-positive degree, slope-semistability as an ordinary sheaf and as a pair are, in fact, equivalent; we include a proof of the result (Proposition \ref{non pos}), for the sake of completeness and for the reader's convenience. Let us recall, first of all, the notions of Hitchin pair, and of their slope-stability. Quite generally, let $Y$ be a scheme and $V$ a coherent sheaf on $Y$ (the \emph{value sheaf}). A \emph{$V$-valued (Hitchin) pair} on $Y$ is a pair
$(F,\phi)$, where $F$ is a coherent sheaf on $Y$, and
$$
\phi:F\to F\otimes_{\mathcal{O}_Y} V
$$
a morphism of $\mathcal{O}_Y$-modules. A subsheaf $\iota:S\hookrightarrow F$ is said to by \emph{$\phi$-invariant} if the restriction of $\phi$ to $S$ factors through $\iota\otimes 1:S\otimes V\to F\otimes V$. A $V$-valued pair $(F,\phi)$ is said to be \emph{integrable} if the composition
\begin{equation}
\label{fifi}
F\xrightarrow{\phi} F\otimes V\xrightarrow{\phi\otimes 1} F\otimes V\otimes V\xrightarrow{1\otimes q} F\otimes \wedge^2V,
\end{equation}
where $q:V\otimes V\to\wedge^2V$ is the canonical epimorphism, vanishes. The map \eqref{fifi} is also denoted by $\phi\wedge\phi$. If $V=\Omega_Y$, the sheaf of K\"ahler differentials of $Y$, an integrable $V$-valued pair is more commonly called a \emph{Higgs sheaf}. 

Let now $(Y,H)$ be a polarized $k$-scheme, with $Y$ integral and regular. In addition, let $V$ be a coherent sheaf on $Y$, and $\mathcal{F}=(F,\phi)$ a $V$-valued pair, with $F$ torsion-free (we also say in this case that the \emph{pair} is torsion-free). The pair $\mathcal{F}$ is said to be \emph{$\mu_H$-semistable} (resp., \emph{$\mu_H$-stable}) if, for each non-zero, proper, $\phi$-invariant subsheaf $S$ of $F$, one has the inequality $\mu_H(S)\leqslant\mu_H(F)$ (resp., $\mu_H(S)<\mu_H(F)$). As usual, we will extend the previous definition to the case in which $Y$ is a \emph{singular curve}.

\begin{prop}
\label{non pos}
Let $(Y,H)$ be a polarized $k$-scheme, with $Y$ integral and regular. Moreover, let $V$ be a non-zero, locally free, $\mu_H$-semistable sheaf on $Y$ satisfying
$$
\deg_H(V)\leqslant 0,
$$
and let $\mathcal{F}=(F,\phi)$ be $V$-valued, torsion-free pair. Then the following are equivalent:
\begin{enumerate}
\item the pair $\mathcal{F}$ is $\mu_H$-semistable;
\item the underlying sheaf $F$ of $\mathcal{F}$ is $\mu_H$-semistable.
\end{enumerate}

\end{prop}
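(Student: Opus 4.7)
My plan for the proof is as follows. The direction $(2)\Rightarrow (1)$ is immediate, since every $\phi$-invariant subsheaf of $F$ is \emph{a fortiori} a subsheaf, so $\mu_H$-semistability of the ordinary sheaf $F$ automatically yields $\mu_H$-semistability of the pair $\mathcal{F}$. All the real work lies in the direction $(1)\Rightarrow(2)$, which I would attack by contradiction.

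Assuming $\mathcal{F}$ is $\mu_H$-semistable while $F$ is not, I would take $S\subset F$ to be the saturated maximal destabilizing subsheaf, i.e.\ the first step of the Harder--Narasimhan filtration of $F$. Thus $S$ is itself $\mu_H$-semistable with $\mu_H(S)>\mu_H(F)$, the quotient $F/S$ is torsion-free, and by the defining property of the HN filtration
$$
\mu_{H,\max}(F/S)<\mu_H(S).
$$
I would then consider the composition
$$
\psi\colon S\hookrightarrow F\xrightarrow{\phi} F\otimes V\twoheadrightarrow (F/S)\otimes V,
$$
and show $\psi=0$, so that $\phi(S)\subseteq S\otimes V$; this would make $S$ a $\phi$-invariant subsheaf of $F$ with $\mu_H(S)>\mu_H(F)$, contradicting the $\mu_H$-semistability of $\mathcal{F}$ and finishing the proof.

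The vanishing $\psi=0$ is meant to follow from the standard fact that $\ho(A,B)=0$ whenever $A,B$ are torsion-free and $\mu_{H,\min}(A)>\mu_{H,\max}(B)$: here $\mu_{H,\min}(S)=\mu_H(S)$ by the $\mu_H$-semistability of $S$, while for the target I would invoke the slope estimate
$$
\mu_{H,\max}\bigl((F/S)\otimes V\bigr)\leqslant \mu_{H,\max}(F/S)+\mu_H(V)<\mu_H(S)+0=\mu_H(S),
$$
where the last inequality uses the hypothesis $\mu_H(V)\leqslant 0$. The principal obstacle is exactly this last bound: for a $\mu_H$-semistable locally free $V$ it rests, in characteristic zero, on the classical Ramanan--Ramanathan theorem that the tensor product of $\mu_H$-semistable sheaves is again $\mu_H$-semistable, and in positive characteristic on Langer's refinements controlling $\mu_{H,\max}$ under tensor products via Frobenius pull-backs. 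Once this estimate is admitted, the remainder of the argument is purely formal and works over an arbitrary polarized integral regular $k$-scheme, in any dimension and any characteristic.
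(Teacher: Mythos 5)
Your proposal is correct and follows essentially the same route as the paper: the paper also proves $(1)\Rightarrow(2)$ contrapositively, taking the maximal destabilizing subsheaf $M\subseteq F$ and showing it is $\phi$-invariant via a slope estimate that uses the semistability of $V$ and $\deg_H(V)\leqslant 0$, merely packaging your Hom-vanishing step as the assertion that $M\otimes V$ is the maximal destabilizing subsheaf of $F\otimes V$. In particular, the tensor-product semistability input you flag (Ramanan--Ramanathan in characteristic zero, delicate in positive characteristic) is used implicitly by the paper as well, so your argument introduces no gap beyond what the paper's own proof already assumes.
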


\begin{proof}
Clearly, if the underlying sheaf $F$ of $\mathcal{F}$ is $\mu_H$-semistable, then so is $\mathcal{F}$. Let us assume now that $F$ is $\mu_H$-unstable, and prove that then so is $\mathcal{F}$. Let $M$ be the maximal $\mu_H$-destabilizing subsheaf of $F$, that is, the unique saturated subsheaf of $F$ such that:
\begin{enumerate}
\item $\mu_H(S)\leqslant\mu_H(M)$ for all subsheaves $S$ of $F$;
\item for any subsheaf $S$ of $F$, if $\mu_H(S)\geqslant\mu_H(M)$, then $S$ is a subsheaf of $M$
\end{enumerate}
(equivalently, $M$ is the smallest, with respect to inclusion, non-zero subsheaf in the $\mu_H$-Harder-Narasimhan filtration of $F$). We have $\mu_H(M)>\mu_H(F)$, since $F$ has been assumed $\mu_H$-unstable. Thus, if we can show that $M$ is $\phi$-invariant, then it will follow that the pair $\mathcal{F}$ is $\mu_H$-unstable, as claimed. To this end, we observe, first of all, that by the $\mu_H$-semistability of the value bundle $V$, the maximal destabilizing subsheaf of the (torsion-free) sheaf $F\otimes V$ is $M\otimes V$. Thus, to prove the inclusion $\psi(M)\subseteq M\otimes V$ it is enough to show that
$$
\mu_H(\psi(M))\geqslant\mu_H(M\otimes V).
$$
But $\psi(M)$ is a quotient of the $\mu_H$-semistable sheaf $M$, whence
$$
\mu_H(\psi(M))\geqslant\mu_H(M)\geqslant\mu_H(M)+\mu_H(V)=\mu_H(M\otimes V)
$$
(in the last inequality we made use of the assumption $\deg_H(V)\leqslant 0$), as claimed. 
\end{proof}

\begin{ex}
The previous result applies, for instance, to Higgs bundles on (integral, regular, complete) curves of genus $\leqslant 1$, and on projective spaces of any dimension.
\end{ex}

We are now ready to state and prove the generalization of Proposition \ref{semistab wrt suitable} to rank $2$ Higgs sheaves on genus one fibered surfaces. 

\begin{prop} \label{prop:3.3}
\label{gen}
Let $\pi:X\to B$ be a \emph{genus one} fibered surface, $c\in\Gamma_X$ a numerical class of the form
$$
c=(2,c_1,c_2),
$$
and $H$ a $c$-suitable polarization on $X$. Moreover, let $
\mathcal{F}=(F,\phi)$
be a torsion-free Higgs sheaf of class $c$ on $X$. Assume the Higgs sheaf $\mathcal{F}$ to be $\mu_H$-semistable. Then the pull-back of $F$ to the generic fiber of $\pi$ is semistable.
\end{prop}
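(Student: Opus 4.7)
The plan is to argue by contradiction: assume $F_\eta$ is not semistable, and produce a $\phi$-invariant destabilizing subsheaf of $F$ on $X$, violating the $\mu_H$-semistability of $\mathcal{F}$. Two ingredients drive the argument: (a) on a genus one fiber, the restriction $V_\eta := \Omega_X|_{X_\eta}$ of the cotangent bundle is a $\mu$-semistable rank $2$ bundle of slope $0$; and (b) the defining property of $c$-suitable polarizations, used exactly as in the proof of Proposition \ref{semistab wrt suitable}.

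So, suppose $F_\eta$ is not semistable. Since $\rk F_\eta = 2$ and $X_\eta$ is an integral curve, there is a line subsheaf $L \subset F_\eta$ with $\deg L > \deg(F_\eta / L)$. I first claim that $L$ is automatically $\phi_\eta$-invariant. To see this, consider the composition
$$
\bar\phi_\eta\colon L \hookrightarrow F_\eta \xrightarrow{\phi_\eta} F_\eta \otimes V_\eta \twoheadrightarrow (F_\eta / L) \otimes V_\eta,
$$
which measures the failure of $\phi_\eta$-invariance of $L$. The relative cotangent sequence $0 \to \pi^\ast \Omega_B|_{X_\eta} \to \Omega_X|_{X_\eta} \to \Omega_{X_\eta/K} \to 0$, together with the fact that the fibration has genus one (so $\deg \omega_{X_\eta/K} = 0$), shows that $V_\eta$ has degree $0$ and is $\mu$-semistable on $X_\eta$. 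Hence $(F_\eta / L) \otimes V_\eta$ is a semistable sheaf of slope $\deg(F_\eta / L)$; if $\bar\phi_\eta$ were nonzero, a standard slope inequality would force $\deg L \leq \deg(F_\eta / L)$, contradicting our choice of $L$. Therefore $\bar\phi_\eta = 0$, i.e., $L$ is $\phi_\eta$-invariant. (This is essentially an application of Proposition \ref{non pos} to $(F_\eta, \phi_\eta)$.)

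Next, I would lift $L$ to a $\phi$-invariant destabilizing subsheaf on $X$. Let $\tilde L \subset F$ be the saturation in $F$ of any coherent extension of $L$; since $X$ is a smooth surface and $\tilde L$ is reflexive of rank $1$, $\tilde L$ is a line bundle, $F / \tilde L$ is torsion-free, and $\tilde L_\eta = L$. The composition $\tilde L \to F \otimes \Omega_X \to (F/\tilde L) \otimes \Omega_X$ vanishes at the generic point by the previous step, and its target is torsion-free, so it vanishes identically; hence $\tilde L$ is $\phi$-invariant. Now set $\xi := 2 c_1(\tilde L) - c_1(F) \in \num(X)$. Then $(\xi \cdot \mathfrak{f}_\pi) = 2\deg L - \deg F_\eta > 0$, while the estimate $c_2(F) \geq c_1(\tilde L) \cdot c_1(F / \tilde L)$ (valid because $F / \tilde L$ is torsion-free of rank $1$, hence has non-negative second Chern number) gives
$$
\xi^2 \;=\; (c_1(F))^2 - 4\, c_1(\tilde L) \cdot c_1(F / \tilde L) \;\geq\; (c_1(F))^2 - 4 c_2(F) \;=\; -\Delta(c).
$$
This is exactly the hypothesis triggering $c$-suitability of $H$, so $(\xi \cdot H) > 0$, i.e., $\mu_H(\tilde L) > \mu_H(F)$, contradicting the $\mu_H$-semistability of $\mathcal{F}$.

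The main technical point I expect to require care is the verification that $V_\eta$ is $\mu$-semistable on $X_\eta$, especially in the quasi-elliptic case where $X_\eta$ is cuspidal and $\Omega_{X_\eta/K}$ is not locally free; however, this reduces to a direct check using the structure of the cotangent sequence on a Gorenstein curve of arithmetic genus one, and the rest of the argument is insensitive to smoothness of the generic fiber. Everything else—extension and saturation of $L$, the Chern-class inequality, and the suitability argument—is parallel to the pattern already established in Proposition \ref{semistab wrt suitable}.
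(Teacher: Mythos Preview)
Your argument is correct and follows essentially the same route as the paper: both proofs reduce to showing that $(\Omega_X)_\eta$ is semistable of degree zero on the genus-one curve $X_\eta$ (the paper's Lemma \ref{diff kal}), apply Proposition \ref{non pos} to conclude that the destabilizing line $L\subset F_\eta$ is $\phi_\eta$-invariant, extend and saturate to a $\phi$-invariant subsheaf on $X$, and then invoke $c$-suitability via the standard discriminant estimate. The only cosmetic difference is that the paper writes the saturated extension as $\alpha\otimes\mathcal I$ rather than asserting it is a line bundle, which sidesteps the need for $F$ to be reflexive; your Chern-class inequality goes through unchanged either way.
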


In the proof, we shall need the next:

\begin{lem}
\label{diff kal}
Let $\pi:X\to B$ be a genus one fibered surface. Then the pull-back of the sheaf of K\"ahler differentials on $X$ to the generic fiber $X_\eta$ of $\pi$ is an extension of free invertible sheaves; in particular it is a locally free and (strictly) slope-semistable sheaf of rank two and trivial determinant (hence of degree and slope zero) on the $K$-curve $X_\eta$.
\end{lem}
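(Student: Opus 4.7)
The plan is to derive the extension from the relative cotangent sequence of $\pi\colon X\to B$, restricted to $X_\eta$. On $X$ the sequence
\begin{equation*}
\pi^*\Omega_B \to \Omega_X \to \Omega_{X/B} \to 0
\end{equation*}
is always right exact; the first map is a nonzero morphism from a line bundle to a rank-two locally free (hence torsion-free) sheaf on the integral scheme $X$, so its kernel---a subsheaf of a line bundle with vanishing generic stalk---must be zero, and the sequence is in fact short exact. Since the inclusion $\mathrm{Spec}(K)\hookrightarrow B$ of the generic point is a localisation, $\iota\colon X_\eta\hookrightarrow X$ is flat and pulling back preserves exactness. Using the base-change compatibility of K\"ahler differentials to identify $\iota^*\Omega_{X/B}$ with $\Omega_{X_\eta/K}$, one obtains a short exact sequence
\begin{equation*}
0 \to \iota^*\pi^*\Omega_B \to \iota^*\Omega_X \to \Omega_{X_\eta/K} \to 0.
\end{equation*}

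The next step is to identify the two outer terms with $\mathcal{O}_{X_\eta}$. The leftmost is immediate: $\Omega_B$ is invertible on $B$, hence free of rank one over $K=\mathcal{O}_{B,\eta}$ at the generic point, so $\iota^*\pi^*\Omega_B\cong\mathcal{O}_{X_\eta}$. For the rightmost I would focus on the elliptic case, in which $X_\eta$ is smooth of genus one over $K$ and $\Omega_{X_\eta/K}$ coincides with the dualising line bundle $\omega_{X_\eta/K}$; by Serre duality on the genus-one curve $X_\eta$, it has $h^0=g=1$ and degree $2g-2=0$. A nonzero global section of a degree-zero line bundle on a proper integral curve has an effective zero-divisor of degree zero, which is empty, so the line bundle is trivial; thus $\Omega_{X_\eta/K}\cong\mathcal{O}_{X_\eta}$, and $\iota^*\Omega_X$ is exhibited as the claimed extension of free invertibles, locally free of rank two and of trivial determinant.

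Finally, slope-semistability is a short Hom-chase: for any line sub-bundle $L\hookrightarrow\iota^*\Omega_X$, either its composite with the quotient $\mathcal{O}_{X_\eta}$ is nonzero---in which case the resulting injective map of line bundles $L\to\mathcal{O}_{X_\eta}$ realises $L$ as $\mathcal{O}_{X_\eta}(-D)$ for an effective divisor $D$, forcing $\deg L\le 0$---or this composite is zero, so that $L$ is contained in the sub $\mathcal{O}_{X_\eta}$, again forcing $\deg L\le 0$. Either way $\mu(L)\le 0=\mu(\iota^*\Omega_X)$, and the sub $\mathcal{O}_{X_\eta}$ itself realises the slope, making the semistability strict. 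The one non-formal input is the identification $\omega_{X_\eta/K}\cong\mathcal{O}_{X_\eta}$ via the degree-and-section argument; the rest is routine base change and elementary Hom-computations on a curve.
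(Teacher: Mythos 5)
Your strategy is the same as the paper's: the relative cotangent sequence of $\pi$, its left-exactness, flat pull-back to $X_\eta$, and identification of the two outer terms as trivial line bundles. The gap is in the identification of the quotient term: you explicitly ``focus on the elliptic case'', i.e.\ you assume $X_\eta$ smooth over $K$, so that $\Omega_{X_\eta/K}$ is the dualising line bundle and Serre duality plus the degree-and-section argument gives triviality. But the lemma is stated for an arbitrary genus one fibered surface over an arbitrary field, and the paper insists on this generality: in characteristic $2$ or $3$ the fibration may be quasi-elliptic, in which case $X_\eta$ is a complete, integral, \emph{regular} genus one curve that is \emph{not} smooth over $K$. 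There $\Omega_{X_\eta/K}$ need not coincide with $\omega_{X_\eta/K}$, Serre duality computes sections of the dualising sheaf rather than of the K\"ahler differentials, and your ``$h^0=1$, degree $0$, hence trivial'' argument has no purchase. Since Lemma \ref{diff kal} is exactly the input to Proposition \ref{gen}, which is proved for genus one fibered surfaces over any field (quasi-elliptic ones included), the omission is not harmless: your proof establishes the lemma only when $\pi$ is elliptic (automatic in characteristic $0$ or $>3$, but not in general). The paper's proof instead asserts directly that $\Omega_{X_\eta}$ is free of rank one because $X_\eta$ is a complete, integral, regular curve of genus one over $K$, with no smoothness hypothesis; to repair your argument you would need to supply a justification of this statement (or some substitute) in the non-smooth case.

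A smaller point in the same direction: you justify injectivity of $\pi^\ast\Omega_B\to\Omega_X$ by calling $\Omega_X$ ``rank-two locally free''. That is automatic only when $X$ is smooth over $k$ (e.g.\ $k$ perfect); over an imperfect field regularity of $X$ does not give it for free. The paper's argument avoids this: the cokernel $\Omega_\pi$ has generic rank one, so the image of $\pi^\ast\omega_B$ has rank one and its kernel has rank zero, and a rank-zero subsheaf of the invertible (hence torsion-free) sheaf $\pi^\ast\omega_B$ on the integral scheme $X$ vanishes. Your final semistability chase for the extension of trivial line bundles is fine and is more detailed than the paper's ``follows easily''.
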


\begin{proof}
We start by remarking that the sequence of relative K\"ahler differentials of $\pi$
$$
\mathcal{K}_\pi:\qquad 0\to\pi^\ast\omega_B\xrightarrow{\pi^\ast}\Omega_X\to\Omega_\pi\to 0
$$
is exact. In fact, first of all, the exactness of the sequence
$$
f^\ast\Omega_T\xrightarrow{f^\ast}\Omega_S\to\Omega_f\to 0
$$
holds for any morphism of schemes $f:S\to T$. When applied to $f=\pi$, this property shows, in addition, that the map $\pi^\ast:\pi^\ast\omega_B\to\Omega_X$ has image of rank one, and thus kernel of rank zero. But the sheaf $\pi^\ast\omega_B$ is invertible on the integral scheme $X$, thus torsion-free. It follows that the sheaf $\ker(\pi^\ast)$ is necessarily zero, showing exactness of $\mathcal{K}_\pi$ at $\pi^\ast\omega_B$ too.

Now, the morphism $\eta:\spec(K)\to B$ is flat, hence so is the projection 
$$
X_\eta=X\times_{\pi,B,\eta}\spec(K)\to X.
$$
Thus the pull-back of the exact sequence $\mathcal{K}_\pi$ along $X_\eta\to X$, that is, 
$$
\left(\mathcal{K}_\pi\right)_\eta:\qquad 0\to \omega_B(\eta)\otimes_K\mathcal{O}_{X_\eta}\to\left(\Omega_X\right)_\eta\to\Omega_{X_\eta}\to 0,
$$
is also exact. Here $\omega_B(\eta)\otimes_K\mathcal{O}_{X_\eta}$ is the free sheaf on $X_\eta$ with fiber the $1$-dimensional $K$-vector space $\omega_B(\eta)=\omega_B\otimes_{\mathcal{O}_B}K$, and the sheaf $\Omega_{X_\eta}$ is also free of rank $1$, since the $K$-scheme $X_\eta$ is a complete, integral, regular curve of genus one. This shows that $(\Omega_X)_\eta$ is an extension of free invertible sheaves, as claimed. The remaining properties of $(\Omega_X)_\eta$ (rank, determinant, degree, slope, strict semistability) follow easily from what we just proved.
\end{proof}

\begin{proof}[Proof of Proposition \ref{gen}]
We will show that if the pull-back $F_\eta$ of $F$ to $X_\eta$ is unstable, then the Higgs sheaf $\mathcal{F}$ is $\mu_H$-unstable. By Lemma \ref{diff kal} and  Proposition \ref{non pos}, the instability of $F_\eta$ implies the instability of the $\left(\Omega_X\right)_\eta$-valued pair
$$
\left(F_\eta,\phi_\eta:=\phi\otimes\mathcal{O}_{X_\eta}:F_\eta\to F_\eta\otimes\left(\Omega_X\right)_\eta\right).
$$
So let $\lambda\subset F_\eta$ be an invertible saturated subsheaf which is $\phi_\eta$-invariant and satisfies
\begin{equation}
\label{ineq}
\mu_{X_\eta}(\lambda)>\mu_{X_\eta}(F_\eta)=\frac{\deg_\pi(F)}{2}.
\end{equation}
Here we denoted by $\mu_{X_\eta}$ the slope function on $X_\eta$. By flat descent, $\lambda$ can be extended to a subsheaf $S$ of $F$, which can, and will, be assumed saturated. 

The sheaf $S$ is $\phi$-invariant. In fact the composition
$$
S\hookrightarrow F\xrightarrow{\phi} F\otimes\Omega_X\to (F/S)\otimes\Omega_X
$$
is zero, since it has torsion-free source and target, and it pulls-back on $X_\eta$ to
$$
\lambda\hookrightarrow F_\eta\xrightarrow{\phi_\eta} F_\eta\otimes\left(\Omega_X\right)_\eta\to (F_\eta/\lambda)\otimes\left(\Omega_X\right)_\eta,
$$
which is the zero map, since $\lambda\subset F_\eta$ is $\phi_\eta$-invariant. Thus, by the exactness of the sequence
$$
0\to S\otimes\Omega_X\to F\otimes\Omega_X\to(F/S)\otimes\Omega_X,
$$
the restriction $S\hookrightarrow F\xrightarrow{\phi} F\otimes\Omega_X$ of the Higgs field $\phi$ of $F$ to $S$ factors through the inclusion $S\otimes\Omega_X\hookrightarrow F\otimes\Omega_X$.

In addition, one has $
\mu_H(S)>\mu_H(F)$, namely,
\begin{equation}
\label{ineq2}
\left((2\mathrm{c}_1(S)-\mathrm{c}_1(F))\cdot H\right)>0.
\end{equation}
To show this, we start by using the equality 
$$
\mu_{X_\eta}(\lambda)=\mu_{X_\eta}(S_\eta)=\deg_\pi(S)
$$
to rewrite \eqref{ineq} as
$$
\deg_\pi(2\mathrm{c}_1(S)-\mathrm{c}_1(F))>0.
$$
Thus, \eqref{ineq2} will follow from the fact that the polarization $H$ is $c$-suitable, if we can show that
$$
-\Delta(c)\leqslant\left((2\mathrm{c}_1(S)-\mathrm{c}_1(F))^2\right).
$$
Following \cite{huy}, this can be seen as follows: one fixes isomorphisms
$$
S\simeq\alpha\otimes\mathcal{I},\qquad F/S\simeq\beta\otimes\mathcal{J},
$$
where $\alpha,\beta$ are invertible sheaves on $X$, and $\mathcal{I},\mathcal{J}$ ideal sheaves of closed subschemes of $X$ of dimension $\leqslant 0$, which allows one to write the Chern classes of $F$ as
\begin{eqnarray*}
\mathrm{c}_1(F)&=&\mathrm{c}_1(\alpha)+\mathrm{c}_1(\beta)=\mathrm{c}_1(S)+\mathrm{c}_1(F/S),\\
\mathrm{c}_2(F)&=&\mathrm{h}^0(\mathcal{O}_X/\mathcal{I})+(\alpha\cdot\beta)+\mathrm{h}^0(\mathcal{O}_X/\mathcal{J}).
\end{eqnarray*}
Thus one finds for $\Delta(c)$ the inequality
\begin{eqnarray*}
-\Delta(c)&=&\left(\mathrm{c}_1(F)^2\right)-4\mathrm{c}_2(F)\\
&=&
\left(\alpha^2\right)+2(\alpha\cdot\beta)+\left(\beta^2\right)\\
&&
-4(\alpha\cdot\beta)-4\left(\mathrm{h}^0(\mathcal{O}_X/\mathcal{I})+\mathrm{h}^0(\mathcal{O}_X/\mathcal{J})\right)\\
&\leqslant &\left((\mathrm{c}_1(\alpha)-\mathrm{c}_1(\beta))^2\right)\\
&=&
\left((2\mathrm{c}_1(S)-\mathrm{c}_1(F))^2\right).
\end{eqnarray*}

\end{proof}

\bigskip
\section{Odd fiber degree} 
\label{sec:odd}

In this section we work over an \emph{algebraically closed} field $k$. We fix, \emph{once and for all}, the following data:
\begin{enumerate}
\item a genus one fibered surface $\pi:X\to B$, with fiber class $\mathfrak{f}_\pi\in\num(X)$;
\item a numerical class $
c\in\Gamma_X=\mathbb{Z}\times\num(X)\times\mathbb{Z}$
of the form
$$
c=(2,c_1,c_2)
$$ on $X$, satisfying the following \emph{assumption}:
\begin{equation}
\label{odd}
\text{the fiber degree $\deg_\pi(c_1)=(c_1\cdot\mathfrak{f}_\pi)$ is odd}.
\end{equation}
\item a $c$-suitable polarization $H$ on $X$.
\end{enumerate}

 We are interested in torsion-free Higgs sheaves of class $c$ on $X$, slope-semistable (or semistable) with respect to $H$, and in their moduli. By \emph{semistability} without any further qualifier we mean \emph{Gieseker-semistability}, whose definition we proceed to recall for the sake of completeness: let $(Y,H)$ be a polarized scheme over an arbitrary field $K$, and let $F$ be a coherent sheaf on $Y$. The \emph{Hilbert polynomial} of $F$ with respect to the polarization $H$ is the unique polynomial $\mathrm{P}_{H}(F)\in\mathbb{Q}[T]$ such that 
$$
\mathrm{P}_{H}(F)(n)=\chi(Y,F\otimes H^n)
$$
for all $n\in\mathbb{Z}$. It is known that $\mathrm{P}_{H}(F)$ has degree equal to the dimension $\dim (F)$ of $F$\footnote{This is, by definition, the dimension of the support $\{y\in Y:F_y\neq 0\}$ of $F$.}. If $F$ in non-zero of dimension $d$, and $\alpha_{H,d}(F)\in\mathbb{Q}^\times$ is the leading coefficient of $\mathrm{P}_{H}(F)$, then the quotient
$$
\mathrm{p}_{H}(F):=\frac{\mathrm{P}_{H}(F)}{\alpha_{H,d}(F)}\in\mathbb{Q}[T]
$$
 is called the \emph{normalized} Hilbert polynomial of $F$ with respect to $H$. $F$ is said to be \emph{pure} if, for each non-zero subsheaf $S$ of $F$, one has the equality $\dim S=\dim F$. Finally, $F$ is said to be \emph{semistable with respect to $H$}, or $H$-semistable for short, if:
 \begin{enumerate}
 \item $F$ is pure;
 \item for each non-zero, proper subsheaf $S$ of $F$ one has the inequality
 \begin{equation}
 \label{semis}
 \mathrm{p}_{H}(S)\leqslant\mathrm{p}_{H}(F),
 \end{equation}
 where polynomials in $\mathbb{Q}[T]$ are ordered lexicographically.
 \end{enumerate}
As usual, if one replaces the inequality \eqref{semis} with the strict inequality $ \mathrm{p}_{H}(S)<\mathrm{p}_{H}(F)$, one obtains the notion of \emph{stability with respect to $H$}. If $F$ is torsion-free, the following chain of implications holds:
\begin{equation}
\label{chain}
\text{$F$ is $\mu_H$-stable $\Rightarrow$ $F$ is $H$-stable $\Rightarrow$ $F$ is $H$-semistable $\Rightarrow$ $F$ is $\mu_H$-semistable.}
\end{equation}

  If $V$ is a coherent sheaf on $Y$, \emph{a $V$-valued Hitchin pair $(F,\phi)$ is semistable} if $F$ is pure, and the inequality \eqref{semis} holds for all $\phi$-invariant, proper, non-zero subsheaves $S$ of $F$; in an analogous way one obtains the notion of \emph{stable pair}. If $F$ is torsion-free, the chain of implications obtained by replacing $F$ with $(F,\phi)$ in \eqref{chain} is also true.
  
The first consequence of Proposition \ref{gen}, combined with assumption \eqref{odd} is the next result, showing that for a rank $2$, torsion-free Higgs sheaf $(F,\phi)$ of odd fiber degree on a genus one fibered surface, the (slope-)semistability with respect to a $\mathrm{c}(F)$-suitable polarization is equivalent to the (slope-)stability of the underlying sheaf. More precisely (recall that the surface $X$, the class $c$, and the polarization $H$ are those specified at the beginning of the section):
  
 \begin{prop}
 \label{ss implies s}
 Let $\mathcal{F}=(F,\phi)$ be a torsion-free Higgs sheaf of class $c$ on $X$. Assume  $\mathcal{F}$ to be $\mu_H$-semistable. Then the underlying sheaf  $F$ of $\mathcal{F}$ is $\mu_H$-stable. Analogously, if $\mathcal{F}$ is $H$-semistable, then $F$ is $H$-stable.
\end{prop}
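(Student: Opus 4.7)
The plan is to chain together three results already established in the excerpt: Proposition \ref{gen} (which relates $\mu_H$-semistability of the Higgs pair to semistability on the generic fiber), a numerical observation specific to rank $2$ with odd fiber degree, and Proposition \ref{semistab wrt suitable}(2) (which promotes stability of the generic fiber back to $\mu_H$-stability of $F$ on $X$).

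More precisely, assume $\mathcal{F}=(F,\phi)$ is $\mu_H$-semistable. By Proposition \ref{gen}, the restriction $F_\eta$ is a torsion-free, rank $2$ semistable sheaf on the integral, regular $K$-curve $X_\eta$, with
$$\mu_{X_\eta}(F_\eta)=\frac{\deg_\pi(c_1)}{2}.$$
Here I would invoke assumption \eqref{odd}: any proper non-zero subsheaf of $F_\eta$ on the curve $X_\eta$ has rank $1$, hence integer degree and integer slope; therefore its slope cannot equal the half-integer $\deg_\pi(c_1)/2$. Consequently the weak inequality forced by semistability is automatically strict, i.e., $F_\eta$ is in fact stable. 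Then Proposition \ref{semistab wrt suitable}(2) (which is available because $H$ is $c$-suitable) yields that $F$ is $\mu_H$-stable on $X$. This settles the slope case.

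For the Gieseker version, I would exploit the chain of implications \eqref{chain}, which the excerpt recalls holds both for ordinary torsion-free sheaves and for torsion-free Hitchin pairs. If $\mathcal{F}$ is $H$-semistable, then in particular it is $\mu_H$-semistable, so by what has just been proved, $F$ is $\mu_H$-stable; applying \eqref{chain} to $F$ itself in the other direction gives that $F$ is $H$-stable, as desired.

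There isn't really a hard step here: everything is a formal combination of Propositions \ref{semistab wrt suitable} and \ref{gen} with the parity trick. The only point that deserves care is making sure the parity argument is valid for the generic fiber even when $X_\eta$ is not smooth over $K$, i.e., in the quasi-elliptic case. This is fine because $X_\eta$ is, by definition of a fibered surface, integral and regular, so the degree of a torsion-free rank $1$ sheaf on $X_\eta$ is a well-defined integer and the slope comparison runs exactly as in the elliptic case.
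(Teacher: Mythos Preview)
Your proposal is correct and follows essentially the same route as the paper's proof: invoke Proposition~\ref{gen} to get semistability of $F_\eta$, use the coprimality of rank $2$ and the odd fiber degree to upgrade this to stability on the generic fiber, then apply Proposition~\ref{semistab wrt suitable}(2), and finally use the chain~\eqref{chain} for the Gieseker statement. The only cosmetic difference is that the paper phrases the middle step as ``rank and degree relatively prime, hence stable by well-known properties of semistable bundles on curves,'' whereas you spell out the parity argument explicitly.
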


\begin{proof}
By Proposition \ref{gen}, we know that $F_\eta$ is a semistable locally free sheaf.
Its rank
$$
\mathrm{rk}(F_\eta)=\mathrm{rk}(F)=2
$$
and degree
$$
\deg_{X_\eta}(F_\eta)=\deg_\pi(F)
=\deg_\pi(c_1)
$$
are relatively prime. Thus, by well known properties of semistable vector bundles on curves, $F_\eta$ is actually stable. By Proposition \ref{semistab wrt suitable}, it then follows that $F$ is $\mu_H$-stable, as claimed. The last statement is a consequence of the first, and the analogue of \eqref{chain} for Higgs sheaves.
\end{proof}

Next, we will point out some consequences of Proposition \ref{ss implies s} for the moduli schemes of $H$-semistable Higgs (and ordinary) sheaves of class $c$. From this point on, we will denote by 
$$
M(c):=\mathrm{M}(X,H;c),\qquad M_{\mathrm{Higgs}}(c):=\mathrm{M}_{\mathrm{Higgs}}(X,H;c)
$$
the moduli schemes of $H$-semistable ordinary and Higgs sheaves on $X$ of class $c$, and by
$$
M^{\mathrm{s}}(c)=\mathrm{M}^{\mathrm{s}}(X,H;c),\qquad M_{\mathrm{Higgs}}^{\mathrm{s}}(c)=\mathrm{M}_{\mathrm{Higgs}}^{\mathrm{s}}(X,H;c)
$$
the open subschemes  of $M(c)$ and $M_{\mathrm{Higgs}}(c)$, corresponding to $H$-stable ordinary and Higgs sheaves, respectively. Then we have:

\begin{prop}
\label{res}
\begin{enumerate}
\item The open immersions
$$
M^\mathrm{s}(c)\hookrightarrow M(c),\qquad M^{\mathrm{s}}_{\mathrm{Higgs}}(c)\hookrightarrow M_{\mathrm{Higgs}}(c)
$$
are isomorphisms. In particular, closed points of $M(c)$ (resp., of $M_{\mathrm{Higgs}}(c)$) correspond bijectively to isomorphism classes of $H$-stable sheaves (resp., Higgs sheaves) of class $c$;
\item the operation of forgetting the Higgs field of an $H$-semistable Higgs sheaf of class $c$ gives a well-defined and surjective morphism of schemes
\begin{equation}
\label{forget}
p:M_{\mathrm{Higgs}}(c)\to M(c).
\end{equation}
\end{enumerate}

\end{prop}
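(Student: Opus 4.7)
The plan is to deduce both statements essentially directly from Proposition \ref{ss implies s}, combined with standard moduli-theoretic formalism. No additional geometric input is needed: the content is entirely the translation of ``$\mu_H$-semistable implies underlying sheaf is $\mu_H$-stable'' into the language of coarse moduli schemes.

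For (1), I would first observe that the analogue for Higgs sheaves of the chain \eqref{chain} gives $H$-semistable $\Rightarrow \mu_H$-semistable. Thus, if $\mathcal F=(F,\phi)$ is an $H$-semistable Higgs sheaf of class $c$, Proposition \ref{ss implies s} forces the underlying sheaf $F$ to be $\mu_H$-stable, hence $H$-stable, hence $\mathcal F$ itself is $H$-stable. Applied to the trivial Higgs field this also shows that every $H$-semistable ordinary sheaf of class $c$ is $H$-stable. Consequently, every closed point of $M(c)$ (resp.\ of $M_{\mathrm{Higgs}}(c)$) already lies in the open subscheme $M^{\mathrm s}(c)$ (resp.\ $M_{\mathrm{Higgs}}^{\mathrm s}(c)$). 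Since these coarse moduli schemes are of finite type over the algebraically closed field $k$, hence Jacobson, any open subset containing all closed points is the whole scheme. As an open immersion whose image is the full target is automatically an isomorphism, the first assertion follows.

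For (2), the point is that the set-theoretic assignment $(F,\phi)\mapsto F$ is compatible with $H$-semistability: the same reasoning as above shows that if $(F,\phi)$ is an $H$-semistable Higgs sheaf of class $c$, then $F$ is $H$-(semi)stable of class $c$. This goes through in families — if $(\mathcal F,\Phi)$ is an $S$-flat family of $H$-semistable Higgs sheaves of class $c$, the underlying $\mathcal F$ is an $S$-flat family of $H$-semistable sheaves of the same class — so the forgetful natural transformation between the two moduli functors descends to a morphism $p:M_{\mathrm{Higgs}}(c)\to M(c)$ of coarse moduli schemes. Surjectivity is then checked at closed points: given an $H$-stable sheaf $F$ representing a closed point of $M(c)$, the pair $(F,0)$ is an $H$-stable Higgs sheaf whose image under $p$ is $[F]$. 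Since both schemes are of finite type over $k$ algebraically closed, surjectivity on closed points implies surjectivity of the morphism.

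The only delicate point I anticipate is part (1): one must be careful to argue at the scheme level and not just on closed points, which is precisely where the Jacobson/finite-type remark does the work. Everything else — constructibility of the moduli schemes, existence of the forgetful morphism on coarse moduli, and the universal recipe $(F,0)$ producing a preimage — is routine, and relies only on the standard (Simpson-type) construction of $M_{\mathrm{Higgs}}(c)$ and $M(c)$ together with Proposition \ref{ss implies s}.
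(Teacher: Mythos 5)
Your proposal is correct and follows essentially the same route as the paper: part (1) comes from Proposition \ref{ss implies s} together with the Higgs analogue of \eqref{chain} (the paper also notes that $H$-semistability of class $c$ forces torsion-freeness, so the proposition applies), and part (2) comes from the forgetful natural transformation between the moduli functors descending to the coarse moduli schemes via their corepresentation property. The only cosmetic difference is at the end: the paper proves surjectivity by observing that $\mathcal{F}\mapsto(\mathcal{F},0)$ defines a scheme-theoretic section of $p$, while you check surjectivity on closed points and invoke finite type over $k$ (Chevalley/Jacobson); both arguments are fine.
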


\begin{proof}
\begin{enumerate}
\item Saying that the open immersion $M^{\mathrm{s}}_{\mathrm{Higgs}}(c)\hookrightarrow M_{\mathrm{Higgs}}(c)$ is an isomorphism is the same as saying that each $H$-semistable Higgs sheaf of class $c$ is actually $H$-stable, and this follows immediately from Proposition \ref{ss implies s}. In fact, if a Higgs sheaf $\mathcal{F}=(F,\phi)$ of class $c$ is $H$-semistable, then it is pure of dimension $2=\dim X$, namely torsion-free, so by Proposition \ref{ss implies s} its underlying sheaf $F$ is $H$-stable. This obviously implies that $\mathcal{F}$ is also $H$-stable, as claimed. Applying the same reasoning to Higgs sheaves of the form $(F,0)$, one obtains the analogous result for the moduli schemes of ordinary sheaves.
\item We recall that the scheme $M(c)$ corepresents the functor
$$
\varphi:(\mathrm{Sch}/k)^{\mathrm{op}}\to(\mathrm{Set})
$$
which assigns to a $k$-scheme $S$ (separated and of finite-type) the set $\varphi(S)$ of isomorphism classes of flat families of $H$-semistable sheaves of class $c$ on $X$ parametrized by $S$; and to a morphism of schemes $f:S\to T$ the map of sets $\varphi(T)\to\varphi(S)$ sending the isomorphism class of a family $\mathcal{G}\in\mathrm{Coh}(T\times X)$ to the isomorphism class of the family $(f\times\mathrm{id}_X)^\ast\mathcal{G}$. This means that there exists a natural transformation $\varphi\to \mathrm{h}_{M(c)}$, where $\mathrm{h}_{M(c)
}:(\mathrm{Sch}/k)^{\mathrm{op}}\to(\mathrm{Set})$ is the functor of points of the scheme $M(c)$, which is initial in the category of natural transformations of the form $\varphi\to\mathrm{h}_Y$, $Y$ being any $k$-scheme.

Analogously, the scheme $M_{\mathrm{Higgs}}(c)$ corepresents the functor
$$
\varphi_{\mathrm{Higgs}}:(\mathrm{Sch}/k)^{\mathrm{op}}\to(\mathrm{Set})
$$
sending a $k$-scheme $S$ to the set of isomorphism classes of flat families of $H$-semistable Higgs sheaves of class $c$ on $X$ parametrized by $S$.

Now, let $S$ be a $k$-scheme, and let $(\mathcal{F},\Phi)$ be a flat family of $H$-semistable Higgs sheaves of class $c$ on $X$ parametrized by $S$; thus, $\mathcal{F}$ is a coherent sheaf on $S\times X$, flat over $S$, and $\Phi:\mathcal{F}\to\mathcal{F}\otimes_{\mathcal{O}_{S\times X}}\Omega_X$ an $\mathcal{O}_{S\times X}$-linear map satisfying $\Phi\wedge\Phi=0$ in $\mathrm{Hom}_{\mathcal{O}_{S\times X}}(\mathcal{F},\mathcal{F}\otimes_{\mathcal{O}_{S\times X}}\omega_X)$, such that, for each closed point $s$ of the parameter scheme $S$, the pull-back $(\mathcal{F}_s,\Phi_s)$ of $(\mathcal{F},\Phi)$ to the fiber of the projection $S\times X\to S$ over $s$, is an $H$-semistable Higgs sheaf of class $c$ on $X$. By Proposition \ref{ss implies s}, $\mathcal{F}$ is then a flat family of $H$-semistable sheaves of class $c$ on $X$ parametrized by $S$. Moreover, an isomorphism $(\mathcal{F},\Phi)\xrightarrow{\simeq}(\mathcal{G},\Psi)$ of families of Higgs sheaves (parametrized by the same scheme) gives, in particular, an isomorphism $\mathcal{F}\xrightarrow{\simeq}\mathcal{G}$ of families of  ordinary sheaves. It follows that the association
$$
[(\mathcal{F},\Phi)]\mapsto[\mathcal{F}]
$$
(here $[\ast]$ denotes the isomorphism class of the object $\ast$ in the appropriate category) gives a well-defined set map
$$
\varphi_{\mathrm{Higgs}}(S)\to\varphi(S),
$$
which is clearly natural in $S$. Thus we have a natural transformation of functors $(\mathrm{Sch}/k)^{\mathrm{op}}\to(\mathrm{Set})$
$$
\alpha:\varphi_\mathrm{Higgs}\to\varphi.
$$
Composing $\alpha$ with the natural transformation $\varphi\to\mathrm{h}_{M(c)}$, we obtain a natural transformation
$$
\varphi_{\mathrm{Higgs}}\xrightarrow{\alpha}\varphi\to\mathrm{h}_{M(c)},
$$
which then factors uniquely through $\varphi_{\mathrm{Higgs}}\to\mathrm{h}_{M_{\mathrm{Higgs}}(c)}$, producing a natural transformation
$$
\overline{\alpha}:\mathrm{h}_{M_{\mathrm{Higgs}}(c)}\to\mathrm{h}_{M(c)}.
$$
By Yoneda's Lemma we then have $\overline{\alpha}=\mathrm{h}_p$ for a unique morphism of $k$-schemes
$$
p:M_\mathrm{Hig}(c)\to M(c).
$$

In the same way, sending (the isomorphism class of) a family $\mathcal{F}$ of $H$-semistable sheaves of class $c$ to the (isomorphism class of the) family of ($H$-semistable, of class $c$) Higgs sheaves $(\mathcal{F},0)$, produces a morphism of schemes $M(c)\to M_{\mathrm{Higgs}}(c)$, which is clearly a section of $p:M_\mathrm{Higgs}(c)\to M(c)$, thus proving the surjectivity of $p$.
\end{enumerate}
\end{proof}

Thus, by $(2)$ of  Proposition \ref{res}, the moduli scheme $M_{\mathrm{Higgs}}(c)$ of $H$-semistable Higgs sheaves of class $c$ on $X$ appears as the total space of a fibration with base the moduli scheme $M(c)$ of $H$-semistable ordinary sheaves, also of class $c$. In the case $\mathrm{char}(k)\neq 2$, and under the additional assumption that $\pi$ is elliptic and non-isotrivial, we will now show that this fibration is trivial with fiber an affine space depending only on the surface $X$ (and not on the particular choice of genus one fibration $\pi:X\to B$).

First of all, we recall that an elliptic surface $p:S\to C$ is said to be isotrivial if it is \emph{\'etale-locally trivial}, that is, if there exists a surjective \'etale map $D\to C$ from a curve $D$ to $C$, having the property that the base-changed fibration $D\times_CS\to D$ is trivial, namely, isomorphic to a trivial fibration $D\times E\to D$, for some elliptic curve $E$. The property of non-isotrivial elliptic surfaces we shall use is expressed as follows.

\begin{lem}
\label{lem}
Let $p:S\to C$ be a non-isotrivial elliptic surface. Then the pull-back of the extension
\begin{equation}
\label{estensione}
\mathcal{K}_p:\qquad 0\to p^\ast\omega_C\xrightarrow{p^\ast}\Omega_S\to\Omega_p\to 0
\end{equation}
to the generic fiber of $p$ is a non-split extension of trivial line bundles.

\begin{proof}
{
The claim is equivalent to the assertion that the image 
$$
s\in\mathrm{H}^0(B,\mathit{Ext}^1_p(\Omega_p,p^\ast\omega_C))
$$
of the class of the extension \eqref{estensione} is generically non-zero. Let $U\subseteq C$ be the open (dense) subset of $C$ consisting of regular values of $p$, and let $\hat{p}:p^{-1}(U)\to U$ be the base change of $p$ along the inclusion $U\hookrightarrow C$. Then the restriction $\hat{s}$ of $s$ to $U$ is the same as an element
$$
\hat{s}
\in
\mathrm{H}^0
(U,
\mathit{Ext}^1_{\hat{p}}(\Omega_{\hat{p}},\hat{p}^\ast\omega_U))
\simeq
\mathrm{Hom}(\Theta_U,\mathrm{R}^1\hat{p}_\ast\Theta_{\hat{p}}),
$$
which is nothing but the Kodaira-Spencer section of the smooth morphism $\hat{p}$; this is a morphism of invertible sheaves (the sheaf $\mathrm{R}^1\hat{p}_\ast\Theta_{\hat{p}}$ is invertible by Grauert's theorem), and it is non-zero by virtue of the assumption of non-isotriviality.}
\end{proof}

\end{lem}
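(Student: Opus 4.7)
My plan is to show that the extension $\mathcal{K}_p$, when restricted to $S_\eta$, is a non-split extension of trivial line bundles. The triviality of the outer terms is immediate: $p^{\ast}\omega_C$ pulls back to a trivial line bundle on $S_\eta$ because it is pulled from $C$, while $\Omega_p$ restricts to the canonical sheaf of the smooth genus-one curve $S_\eta$, which is trivial. The whole point is therefore the non-splitting, and I would attack it in three stages.

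First, I would reformulate the splitting question in terms of relative $\mathit{Ext}$. The class of $\mathcal{K}_p$ in $\mathrm{Ext}^1_{\mathcal{O}_S}(\Omega_p, p^{\ast}\omega_C)$ maps, via the edge morphism of the local-to-global $\mathit{Ext}$ spectral sequence, to a global section
$$
s \in H^0\bigl(C, \mathit{Ext}^1_p(\Omega_p, p^{\ast}\omega_C)\bigr).
$$
Flat base change along $\eta \to C$ identifies the stalk $s_\eta$ with the class of $\mathcal{K}_p$ restricted to $S_\eta$. So the extension splits on the generic fiber if and only if $s_\eta = 0$.

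Second, I would pass to the open dense locus $U \subseteq C$ of regular values of $p$ and write $\hat{p}:\hat{S}\to U$ for the base change. On this locus $\Omega_{\hat{p}}$ is invertible, so combining the projection formula with relative Serre duality on the smooth elliptic fibers I get
$$
\mathit{Ext}^1_{\hat{p}}(\Omega_{\hat{p}}, \hat{p}^{\ast}\omega_U) \;\simeq\; \omega_U \otimes R^1\hat{p}_{\ast}\Theta_{\hat{p}} \;\simeq\; \mathrm{Hom}(\Theta_U, R^1\hat{p}_{\ast}\Theta_{\hat{p}}).
$$
Under this chain of isomorphisms, the restriction $\hat{s} := s|_U$ is identified, via the standard construction of the Kodaira-Spencer map as the boundary operator of the dualized relative cotangent sequence, with the Kodaira-Spencer morphism of the smooth family $\hat{p}$.

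The decisive step, and the one I expect to be the main obstacle, is showing that this Kodaira-Spencer morphism is not identically zero. By Grauert's theorem the sheaf $R^1\hat{p}_{\ast}\Theta_{\hat{p}}$ is invertible, since the fibers are smooth elliptic curves with $h^1(\Theta)=1$; thus $\hat{s}$ is a map of invertible sheaves on $U$ and either vanishes identically or is generically nonzero. Identical vanishing of the Kodaira-Spencer map, however, forces the smooth family $\hat{p}$ to be \'etale-locally trivial, and hence $p$ itself to be isotrivial, contradicting our hypothesis. Therefore $\hat{s}$ is generically nonzero on the dense open $U$, which forces $s_\eta \neq 0$ and gives the asserted non-splitting over $S_\eta$.
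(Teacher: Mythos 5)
Your proposal is correct and follows essentially the same route as the paper: reduce non-splitting on the generic fiber to the generic non-vanishing of the section of $\mathit{Ext}^1_p(\Omega_p,p^\ast\omega_C)$ determined by $\mathcal{K}_p$, identify its restriction over the locus of regular values with the Kodaira--Spencer morphism $\Theta_U\to\mathrm{R}^1\hat{p}_\ast\Theta_{\hat{p}}$ (invertible by Grauert's theorem), and conclude by non-isotriviality. The additional details you supply (triviality of the two outer terms, the flat base change identification of the stalk at $\eta$) are consistent with, and slightly more explicit than, the paper's argument.
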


The next result is about sheaves of arbitrary rank.

\begin{prop}
\label{la prop}
Let $\pi:X\to B$ be elliptic and non-isotrivial, and let $F$ be a torsion-free sheaf on $X$, semistable on a general closed fiber of $\pi$.
Assume that the rank and fiber-degree of $F$ are relatively prime, and that the rank is non-zero in the ground field $k$. Then the natural map
\begin{equation}
\label{map}
\mathrm{H}^0(\Omega_X)\simeq\mathrm{Hom}(\mathcal{O}_X,\Omega_X)\xrightarrow{-\otimes\mathrm{id}_F}\mathrm{Hom}(F\otimes\mathcal{O}_X,F\otimes\Omega_X)\simeq\mathrm{Hom}(F,F\otimes\Omega_X)
\end{equation}
is an isomorphism of $k$-vector spaces. Thus, each $\Omega_X$-valued field $\phi:F\to F\otimes\Omega_X$ on $F$ is of the form $s\mapsto s\otimes \alpha$ for a unique global $1$-form $\alpha$ on $X$, and, in particular, it satisfies the integrability condition $\phi\wedge\phi=0$.
\end{prop}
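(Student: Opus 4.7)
Injectivity of the map \eqref{map} is immediate: if $\alpha\in \mathrm{H}^0(\Omega_X)$ satisfies $\alpha\otimes\mathrm{id}_F=0$, evaluation at the generic point of $X$ forces $\alpha=0$, as $F$ has positive generic rank. The real content is surjectivity, for which my plan is a trace-splitting argument combined with a generic-fibre vanishing. Let $U\subseteq X$ denote the locally free locus of $F$: since $F$ is torsion-free on the smooth surface $X$, the complement $X\setminus U$ consists of finitely many closed points, and in particular $X_\eta\subseteq U$, so $F_\eta$ is locally free on $X_\eta$. The hypothesis that $r=\rk F$ is invertible in $k$ provides, on $U$, a canonical splitting $\mathcal{E}nd(F|_U)=\mathcal{O}_U\oplus\mathcal{E}nd_0(F|_U)$ into scalars and traceless endomorphisms, and tensoring with $\Omega_X|_U$ yields a trace $\mathrm{tr}\colon\mathcal{H}om(F|_U,F|_U\otimes\Omega_X|_U)\to\Omega_X|_U$. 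For any $\phi\in\mathrm{Hom}(F,F\otimes\Omega_X)$ the section $\mathrm{tr}(\phi|_U)\in \mathrm{H}^0(U,\Omega_X|_U)$ extends uniquely to $\mathrm{tr}(\phi)\in \mathrm{H}^0(X,\Omega_X)$ by Hartogs (since $\Omega_X$ is locally free and $X\setminus U$ has codimension two), and we set
$$
\phi_0:=\phi-\tfrac{1}{r}\,\mathrm{tr}(\phi)\otimes\mathrm{id}_F\in\mathrm{Hom}(F,F\otimes\Omega_X).
$$
Because $F\otimes\Omega_X$ is torsion-free, proving $\phi_0=0$ is equivalent to showing that its restriction $(\phi_0)_\eta$ to the generic fibre vanishes.

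The generic-fibre vanishing I would carry out by passing to the geometric generic fibre $X_{\bar\eta}$, a smooth elliptic curve over $\bar K$ by the ellipticity hypothesis. The assumption of fibrewise semistability of $F$ is equivalent, by the remark following Proposition \ref{semistab wrt suitable}, to the semistability of $F_\eta$; this property persists under the extension $K\subseteq\bar K$, and coprimality of rank and degree upgrades it to stability, so $F_{\bar\eta}$ is stable. By Atiyah's classification, $F_{\bar\eta}$ is simple, $\mathrm{End}(F_{\bar\eta})=\bar K$, and consequently $\mathrm{H}^0(\mathcal{E}nd_0(F_{\bar\eta}))=0$. Since the canonical bundle of $X_{\bar\eta}$ is trivial, Serre duality further yields $\mathrm{H}^1(\mathcal{E}nd_0(F_{\bar\eta}))=0$. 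Tensoring the short exact sequence $0\to\mathcal{O}_{X_{\bar\eta}}\to(\Omega_X)_{\bar\eta}\to\mathcal{O}_{X_{\bar\eta}}\to 0$ from Lemma \ref{diff kal} with $\mathcal{E}nd_0(F_{\bar\eta})$ and passing to cohomology, the group $\mathrm{H}^0(\mathcal{E}nd_0(F_{\bar\eta})\otimes(\Omega_X)_{\bar\eta})$ is sandwiched between two vanishing groups, hence also vanishes. Since $(\phi_0)_{\bar\eta}$ defines a class in this space, $(\phi_0)_{\bar\eta}=0$; by flat descent $(\phi_0)_\eta=0$ and therefore $\phi_0=0$. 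Integrability of $\phi=\tfrac{1}{r}\mathrm{tr}(\phi)\otimes\mathrm{id}_F$ is automatic, as $\mathrm{tr}(\phi)\wedge \mathrm{tr}(\phi)=0$ for any $1$-form.

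The main obstacle is, in my view, purely technical: keeping the trace decomposition under control despite $F$ being only torsion-free (not locally free) on $X$. This is handled by working on the codimension-two-complement open $U$, where the traceless decomposition splits literally, and then using codimension-two extension to globalise the trace. A variant of the generic-fibre step that relies explicitly on non-isotriviality would compute $\mathrm{Hom}(F_{\bar\eta},F_{\bar\eta}\otimes(\Omega_X)_{\bar\eta})$ directly via the connecting homomorphism $\delta\colon\mathrm{End}(F_{\bar\eta})\to\mathrm{Ext}^1(F_{\bar\eta},F_{\bar\eta})$ of the same long exact sequence: Lemma \ref{lem} would force $\delta(\mathrm{id}_{F_{\bar\eta}})\neq 0$, hence $\delta$ would be an isomorphism between one-dimensional $\bar K$-spaces, yielding again a $1$-dimensional $\mathrm{Hom}$ matching $\mathrm{H}^0((\Omega_X)_{\bar\eta})$.
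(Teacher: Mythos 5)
Your proof is correct, and its skeleton coincides with the paper's: split off the trace using that the rank is invertible in $k$, then kill the traceless part by restricting to a fibre on which $F$ is stable, hence simple. The two places where you genuinely diverge are worth recording. First, the paper stays with general closed fibres, showing that $\pi_\ast(\mathit{End}(F)_0\otimes\Omega_X)$ is locally free of rank $\mathrm{h}^0(\mathit{End}(W)_0\otimes\mathbb{I})=0$, whereas you pass to the geometric generic fibre and descend the vanishing to $X$ through torsion-freeness of $F\otimes\Omega_X$; both reductions are sound. Second, and more substantively, the paper's fibre computation leans on non-isotriviality: via Lemma \ref{lem} it identifies the restriction of $\Omega_X$ to a general fibre with the non-split extension $\mathbb{I}$, proves $\mathrm{Hom}(W,W\otimes\mathbb{I})=kj$ using that non-splitness, and concludes from $\mathrm{tr}(j)=\mathrm{rk}(F)\,\xi\neq 0$. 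Your vanishing of $\mathrm{H}^0(\mathit{End}(F_{\bar\eta})_0\otimes(\Omega_X)_{\bar\eta})$ uses only the left-exact cohomology sequence of $0\to\mathcal{O}\to(\Omega_X)_{\bar\eta}\to\mathcal{O}\to 0$ together with simplicity of the stable bundle $F_{\bar\eta}$ (your appeal to $\mathrm{H}^1(\mathit{End}(F_{\bar\eta})_0)=0$ is superfluous: the two outer $\mathrm{H}^0$'s already suffice), so it works whether or not that extension splits. This buys a small but real gain: as a by-product, your argument shows that the non-isotriviality hypothesis is not actually used in the coprime case — it enters the paper's proof only through the identification of the restricted cotangent bundle, and is genuinely needed later, in Propositions \ref{reduce} and \ref{reduced}, where the fibre restrictions are only semistable or regular. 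The remaining bookkeeping in your write-up — codimension-two locally free locus, Hartogs extension of $\mathrm{tr}(\phi)$, semistability of $F_\eta$ from fibrewise semistability, its persistence under $K\subseteq\bar K$, and the coprimality upgrade to stability — is standard and correctly invoked.
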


\begin{proof}
First of all, there is a well defined $X$-linear \emph{trace morphism} 
$$
\mathrm{tr}_F:\mathit{End}(F)\to\mathcal{O}_X
$$
(let $S$ be the scheme of singularities of $F$, which is a closed subscheme of $X$ of codimension $\geqslant 2$; for a Zariski open $U\subseteq X$, a section $\alpha\in\Gamma(U,\mathit{End}(F))$ restricts to an endomorphism of the locally free sheaf $F|_{U\setminus S}$, whose trace is a well-defined element of $\Gamma(U\setminus S,\mathcal{O}_X)$, and this, in turn, is the restriction of a unique element of $\Gamma(U,\mathcal{O}_X)$). The $X$-linear map 
$$
\mathcal{O}_X\to\mathit{End}(F)
$$
corresponding to the identity endomorphism of $F$ splits $\mathrm{tr}_F$, since $\mathrm{rk}(F)\neq 0$ in $k$. Thus, if we set
$$
\mathit{End}(F)_0:=\ker(\mathrm{tr}_F),
$$
then we have a canonical decomposition
$$
\mathit{End}(F)\simeq\mathcal{O}_X\oplus\mathit{End}(F)_0.
$$
As a consequence, the space
$$
\mathrm{Hom}(F,F\otimes\Omega_X)=\mathrm{H}^0(\mathit{Hom}(F,F\otimes\Omega_X))\simeq\mathrm{H}^0(\mathit{End}(F)\otimes\Omega_X)
$$
splits as
$$
\mathrm{Hom}(F,F\otimes\Omega_X)\simeq\mathrm{H}^0(\Omega_X)\oplus\mathrm{H}^0(\mathit{End}(F)_0\otimes\Omega_X).
$$
We claim that the second summand is zero. In fact, we can write
$$
\mathrm{H}^0(\mathit{End}(F)_0\otimes\Omega_X)=\mathrm{H}^0(\pi_\ast(\mathit{End}(F)_0\otimes\Omega_X)),
$$
where the sheaf $\pi_\ast(\mathit{End}_0(F)\otimes\Omega_X)$ is locally-free (since $\mathit{End}_0(F)\otimes\Omega_X$ is torsion-free). To determine its rank, we observe that, on a general closed fiber $E=X_b$ of $\pi$, $F$ restricts to a stable locally free sheaf $W=F_b$, while, as one easily checks, the sheaf $\mathit{End}(F)_0$ restricts to
$$
\mathit{End}(W)_0:=\ker(\mathrm{tr}_W:\mathit{End}(W)\to\mathcal{O}_E).
$$
Moreover, the bundle $\Omega_X$ restricts on $E$ to the \emph{non-split} self-extension $\mathbb{I}$ of the structure sheaf $\mathcal{O}_E$, since $\pi$ has been assumed to be non-isotrivial.  Thus $\mathit{End}(F)_0\otimes\Omega_X$ restricts to $\mathit{End}(W)_0\otimes\mathbb{I}$, and from the exact sequence
$$
0\to\mathit{End}(W)_0\otimes\mathbb{I}\to\mathit{End}(W)\otimes\mathbb{I}\to\mathbb{I}\to 0
$$
we obtain
$$
\mathrm{H}^0(\mathit{End}(W)_0\otimes\mathbb{I})=\ker\left(\mathrm{tr}:\mathrm{Hom}(W,W\otimes\mathbb{I})\to\mathrm{H}^0(\mathbb{I})\right).
$$
To compute the map $\mathrm{tr}:\mathrm{Hom}(W,W\otimes\mathbb{I})\to\mathrm{H}^0(\mathbb{I})$ it is enough to recall some facts from the theory of sheaves on elliptic curves (defined over an algebraically closed field). First of all,
if
$$
\mathcal{E}:\qquad 0\to\mathcal{O}_E\to\mathbb{I}\to\mathcal{O}_E\to 0
$$
is a defining extension of $\mathbb{I}$, then the map $\mathcal{O}_E\to\mathbb{I}$ induces an isomorphism on global sections. Thus, if $\xi\in\mathrm{H}^0(\mathbb{I})$ is the image of $1\in\mathrm{H}^0(\mathcal{O}_E)$, then $\mathrm{H}^0(\mathbb{I})=k\xi$. Next, the sheaf $W\otimes\mathbb{I}$ is indecomposable (with respect to $\oplus$). In particular, the extension
\begin{equation}
\label{ses5}
W\otimes\mathcal{E}:\qquad 0\to W\xrightarrow{j}  W\otimes \mathbb{I}\xrightarrow{q} W\to 0
\end{equation}
is also non-split. From this it follows easily that the map
$$
j_\ast:\mathrm{Hom}(W,W)\to\mathrm{Hom}(W,W\otimes\mathbb{I}),\qquad\phi\mapsto j\circ \phi
$$
is an isomorphism. In fact, for each map $\phi:W\to W\otimes\mathbb{I}$, the composition $q\circ\phi:W\to W$ is zero (otherwise, it would be a non-zero multiple of $\mathrm{id}_W$, hence an automorphism of $W$, and the sequence \eqref{ses5} would split), showing that $\phi$ factors uniquely through $j$. Thus $\mathrm{Hom}(W,W\otimes\mathbb{I})=kj$. Now,
$$
\mathrm{tr}(j)=\mathrm{rk}(F)\xi\neq 0,
$$
showing that $\mathrm{tr}:\mathrm{Hom}(W,W\otimes\mathbb{I})\to\mathrm{H}^0(\mathbb{I})$ is an isomorphism, whence
$$
\mathrm{H}^0(\mathit{End}(W)_0\otimes\mathbb{I})=0.
$$
So the sheaf $
\pi_\ast(\mathit{End}(F)_0\otimes\Omega_X)
$
is zero, and $\mathrm{H}^0(\mathit{End}(F)_0\otimes\Omega_X)=0$, as claimed.
\end{proof}

In the next result, we use the notation
$$
\mathbb{A}(V):=\spec(\sym V^\vee)
$$
for the affine space associated to a $k$-vector space $V$.

\begin{prop}
\label{res 2}
Let us assume $\mathrm{char}(k)\neq 2$ and $\pi$ elliptic and non-isotrivial.   Let
$$
\mathrm{tr}:M_{\mathrm{Higgs}}(c)\to \mathbb{A}(\mathrm{H}^0(\Omega_X))
$$
be the first component of the \emph{Hitchin fibration} (that is, the morphism sending a Higgs sheaf to the trace of its Higgs field). Then the morphism of $M(c)$-schemes
\begin{equation}
\label{mor2}
(p,\mathrm{tr}):M_{\mathrm{Higgs}}(c)\to M(c)\times\mathbb{A}(\mathrm{H}^0(\Omega_X))
\end{equation}
is an isomorphism. Here $p:M_{\mathrm{Higgs}}(c)\to M(c)$ is the fibration \eqref{forget}.
\end{prop}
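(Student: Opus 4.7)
Set $V := \mathrm{H}^0(X, \Omega_X)$, so that $\mathbb{A}(V)$ represents the functor $S \mapsto V \otimes_k \Gamma(S, \mathcal{O}_S) = \mathrm{H}^0(S \times X, \mathrm{pr}_X^* \Omega_X)$. The plan is to exhibit an isomorphism of moduli functors $\tau \colon \varphi_{\mathrm{Higgs}} \xrightarrow{\sim} \varphi \times \mathrm{h}_{\mathbb{A}(V)}$ and then invoke uniqueness of corepresenting schemes; note that $M(c) \times \mathbb{A}(V)$ corepresents $\varphi \times \mathrm{h}_{\mathbb{A}(V)}$, since the universal property defining the coarse moduli space $M(c)$ for $\varphi$ is preserved under taking the product with a representable functor.

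First I construct an inverse natural transformation $\sigma \colon \varphi \times \mathrm{h}_{\mathbb{A}(V)} \to \varphi_{\mathrm{Higgs}}$. Given a flat family $\mathcal{F}$ of $H$-semistable sheaves of class $c$ on $X$ parametrized by a $k$-scheme $S$, and a morphism $\alpha \colon S \to \mathbb{A}(V)$ corresponding to a global section of $\mathrm{pr}_X^* \Omega_X$ on $S \times X$, define
\[
\Phi_\alpha := \alpha \otimes \mathrm{id}_\mathcal{F} \colon \mathcal{F} \longrightarrow \mathcal{F} \otimes_{\mathcal{O}_{S \times X}} \mathrm{pr}_X^* \Omega_X.
\]
Since $\mathrm{pr}_X^* \Omega_X$ has rank $2$, we have $\alpha \wedge \alpha = 0$, hence $\Phi_\alpha$ is integrable; and fiberwise $H$-semistability of $\mathcal{F}_s$ implies $H$-semistability of $(\mathcal{F}_s, (\Phi_\alpha)_s)$. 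In the opposite direction, set $\tau(\mathcal{F}, \Phi) := (\mathcal{F}, \tfrac12 \mathrm{tr}(\Phi))$, which is well-defined since $\mathrm{char}(k) \neq 2$ and $\mathrm{tr}(\Phi) \in \mathrm{H}^0(S \times X, \mathrm{pr}_X^* \Omega_X) = V \otimes_k \Gamma(S, \mathcal{O}_S)$.

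Next I verify that $\tau$ and $\sigma$ are mutually inverse. The composition $\tau \circ \sigma$ is the identity because $\mathrm{tr}(\alpha \otimes \mathrm{id}_\mathcal{F}) = \mathrm{rk}(\mathcal{F}) \cdot \alpha = 2\alpha$. For $\sigma \circ \tau$ to be the identity, one must show that every Higgs field $\Phi$ on such a family $\mathcal{F}$ is of the scalar form $\alpha \otimes \mathrm{id}_\mathcal{F}$. Fiberwise, Proposition \ref{la prop} gives $\mathrm{Hom}_X(\mathcal{F}_s, \mathcal{F}_s \otimes \Omega_X) \simeq V$ for every closed point $s \in S$, with the isomorphism realized by scalar multiplication. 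To globalize, one uses the decomposition $\mathit{End}(\mathcal{F}) \simeq \mathcal{O}_{S \times X} \oplus \mathit{End}(\mathcal{F})_0$ from the proof of Proposition \ref{la prop} (valid for $\mathrm{char}(k) \neq 2$) and reduces to showing that the traceless summand contributes nothing: the fiberwise vanishing $\mathrm{H}^0(X, \mathit{End}(\mathcal{F}_s)_0 \otimes \Omega_X) = 0$ combined with cohomology and base change gives $\mathrm{pr}_{S*}(\mathit{End}(\mathcal{F})_0 \otimes \mathrm{pr}_X^* \Omega_X) = 0$, hence its global sections vanish.

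Finally, the natural isomorphism $\tau$ descends to a canonical isomorphism of corepresenting schemes, $M_{\mathrm{Higgs}}(c) \xrightarrow{\sim} M(c) \times \mathbb{A}(V)$, which is exactly $(p, \tfrac12 \mathrm{tr})$. Since scalar multiplication by $2$ on $V$ defines an automorphism of $\mathbb{A}(V)$ (because $\mathrm{char}(k) \neq 2$), composing yields $(p, \mathrm{tr})$, which is therefore also an isomorphism. The main obstacle is the relative form of Proposition \ref{la prop}: since $\mathcal{F}$ need not be locally free on all of $S \times X$, applying cohomology and base change to $\mathit{End}(\mathcal{F})_0 \otimes \mathrm{pr}_X^* \Omega_X$ requires some care, handled either by restricting to the open locus where $\mathcal{F}$ is locally free (whose fiberwise complement has codimension $\geq 2$, so sections extend uniquely) or by a direct flatness argument on the trace-free part of the endomorphism sheaf.
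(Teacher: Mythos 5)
Your proof is correct and takes essentially the same route as the paper: both construct the inverse functorially, sending a flat family $\mathcal{F}$ together with a relative $1$-form $\alpha$ to the scalar Higgs family $(\mathcal{F},\mathrm{id}_{\mathcal{F}}\otimes\alpha)$, and both rely on Proposition \ref{la prop} (in its relative form, which each argument treats only in outline) to see that every Higgs field on such a family is scalar. Your explicit $\tfrac12\mathrm{tr}$ normalization is in fact slightly more careful than the paper's, whose map $f$ as written inverts $(p,\tfrac12\mathrm{tr})$ rather than $(p,\mathrm{tr})$ --- an immaterial discrepancy, as you note, since multiplication by $2$ is an automorphism of $\mathbb{A}(\mathrm{H}^0(\Omega_X))$ in characteristic $\neq 2$.
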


\begin{proof}
We will prove this by explicitly constructing an inverse to $(p,\mathrm{tr})$. Set
$$
\mathcal{H}:=\mathrm{H}^0(\Omega_X).
$$
With the notation of the proof of Proposition \ref{res}, it is clear that the scheme $M(c)\times\mathbb{A}(\mathcal{H})$ corepresents the functor
$$
(\mathrm{Sch}/k)^{\mathrm{op}}\to(\mathrm{Set}),\qquad S\mapsto\varphi(S)\times\mathrm{Hom}_{k-\mathrm{lin}}(\mathcal{H}^\vee,\Gamma(S,\mathcal{O}_S)).
$$
Observe that
\begin{eqnarray*}
\mathrm{Hom}_{k-\mathrm{lin}}(\mathcal{H}^\vee,\Gamma(S,\mathcal{O}_S))&\simeq & \mathcal{H}\otimes_k\mathrm{H}^0(S,\mathcal{O}_S)\\
&\simeq &\mathrm{H}^0(S\times X,\Omega_X\otimes\mathcal{O}_{S\times X})\simeq\mathrm{Hom}_{S\times X}(\mathcal{O}_{S\times X},\Omega_X\otimes\mathcal{O}_{S\times X}).
\end{eqnarray*}
Thus we have a map
\begin{eqnarray*}
\varphi(S)\times\mathrm{Hom}_{k-\mathrm{lin}}(\mathcal{H}^\vee,\Gamma(S,\mathcal{O}_S))&\to &\varphi_{\mathrm{Higgs}}(S),\\
([\mathcal{F}],a:\mathcal{O}_{S\times X}\to\Omega_X\otimes\mathcal{O}_{S\times X}) &\mapsto &[(\mathcal{F},\mathrm{id}_{\mathcal{F}}\otimes a)],
\end{eqnarray*}
which is readily seen to be well defined and natural in $S$, thus giving rise to a morphism of schemes $f:M(c)\times\mathbb{A}(\mathcal{H})\to M_{\mathrm{Higgs}}(c)$. Using Proposition \ref{la prop}, one checks that $f$ is the required inverse of $(p,\mathrm{tr})$.

\end{proof}

As a consequence of the isomorphism \eqref{mor2}, one has the following:

\begin{cor}
Let the assumptions be the same as in Proposition \ref{res 2}. Then the moduli scheme $M_{\mathrm{Higgs}}(c)$ is non-singular, and for each closed point $[(F,\phi)]\in M_{\mathrm{Higgs}}(c)$ there is a canonical isomorphism
$$
\mathrm{T}_{M_{\mathrm{Higgs}}(c),[(F,\phi)]}\simeq\mathrm{Ext}^1_X(F,F)\oplus\mathrm{H}^0(\Omega_X).
$$

\end{cor}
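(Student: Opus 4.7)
The plan is to invoke the isomorphism from Proposition \ref{res 2}, $M_{\mathrm{Higgs}}(c) \simeq M(c) \times \mathbb{A}(\mathrm{H}^0(\Omega_X))$, to reduce everything to assertions about $M(c)$. Because $\mathbb{A}(\mathrm{H}^0(\Omega_X))$ is smooth with tangent space canonically $\mathrm{H}^0(\Omega_X)$ at every point, both the smoothness of $M_{\mathrm{Higgs}}(c)$ at $[(F,\phi)]$ and the claimed tangent-space decomposition follow at once from the assertion that $M(c)$ is smooth at $[F]$ with Zariski tangent space $\mathrm{Ext}^1_X(F,F)$.

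By Proposition \ref{res}, every closed point of $M(c)$ represents an $H$-stable (hence simple) sheaf, so the identification $\mathrm{T}_{M(c),[F]} \simeq \mathrm{Ext}^1_X(F,F)$ is standard deformation theory. The smoothness of $M(c)$ at $[F]$ will follow from the vanishing of the obstruction space $\mathrm{Ext}^2_X(F,F)_0$. By Serre duality this is equivalent to $\mathrm{Hom}(F, F \otimes \omega_X)_0 = 0$, and applying the trace splitting $\mathit{End}(F) \simeq \mathcal{O}_X \oplus \mathit{End}(F)_0$ (valid since $\mathrm{char}(k) \neq 2$) reduces the problem to proving
$$
\mathrm{H}^0(\mathit{End}(F)_0 \otimes \omega_X) = 0.
$$

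To prove this vanishing, I follow the push-forward strategy of Proposition \ref{la prop}: the sheaf $\pi_*(\mathit{End}(F)_0 \otimes \omega_X)$ is torsion-free, hence locally free, on the smooth curve $B$, and its rank may be computed on the fiber over a general closed point $b \in B$. On such a general fiber $E = X_b$, the restriction $F|_E$ is a stable vector bundle of rank two (combining Propositions \ref{semistab wrt suitable} and \ref{ss implies s} with the oddness of the fiber degree), so $\mathit{End}(F|_E)_0 = 0$; moreover, $\omega_X|_E$ is trivial, since taking determinants in the relative cotangent sequence yields $\omega_X \simeq \pi^*\omega_B \otimes \Omega_\pi$, and both factors restrict trivially to a smooth elliptic fiber. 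Hence $\pi_*(\mathit{End}(F)_0 \otimes \omega_X) = 0$, and the corollary follows. The principal simplification relative to Proposition \ref{la prop} is that working with $\omega_X$ rather than $\Omega_X$ avoids the delicate trace computation forced there by the non-split extension $\mathbb{I}$; here $\omega_X|_E$ is simply trivial, so the main obstacle becomes merely bookkeeping once the reduction to $M(c)$ is made.
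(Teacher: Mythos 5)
Your proposal is correct and follows essentially the same route as the paper: reduce via the product decomposition of Proposition \ref{res 2} to the smoothness of $M(c)$ with tangent space $\mathrm{Ext}^1_X(F,F)$, and kill the obstruction space $\mathrm{Ext}^2_X(F,F)_0\simeq\mathrm{H}^0(\mathit{End}(F)_0\otimes\omega_X)^\vee$ by restricting to a general closed fiber, where $F$ is stable (hence simple) and $\omega_X$ restricts trivially. Two cosmetic points: the paper obtains the triviality of $\omega_X|_{X_b}$ by adjunction rather than your determinant of the cotangent sequence (your global formula should read $\omega_X\simeq\pi^\ast\omega_B\otimes\det\Omega_\pi$, though on a general smooth fiber the conclusion is the same), and your ``$\mathit{End}(F|_E)_0=0$'' should be understood as the vanishing of its \emph{global sections}, not of the sheaf itself.
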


\begin{proof}
The scheme $M(c)$, which equals $M^\mathrm{s}(c)$ by Proposition \ref{res}(1), is non-singular, since, for each closed point $[F]\in M(c)$, we have, by Serre duality and the decomposition $\mathit{End}(F)=\mathcal{O}_X\oplus\mathit{End}(F)_0$,
$$
\mathrm{ext}^2(F,F)_0=\mathrm{hom}(F,F\otimes\omega_X)-\mathrm{h}^0(\omega_X)=\mathrm{h}^0(\mathit{End}(F)_0\otimes\omega_X)=0;
$$
in last equality we used the fact that $\mathit{End}(F)_0\otimes\omega_X$ is torsion-free, with
$$
\mathrm{h}^0(\mathit{End}(F)_0\otimes\omega_X\otimes\mathcal{O}_{X_b})=\mathrm{end}(F\otimes\mathcal{O}_{X_b})_0=0
$$
for $b\in B(k)$ general (recall that $\omega_{X_b}\simeq\mathcal{O}_{X_b}$, since $X_b$ is an elliptic curve, and $\mathcal{O}_{X}(-X_b)\simeq\pi^\ast\mathcal{O}_B(-b)$, so that  $\omega_X\otimes\mathcal{O}_{X_b}\simeq\omega_{X_b}\otimes\mathcal{O}_{X}(-X_b)\simeq\omega_{X_b}\otimes(\mathcal{O}_B(-b)\otimes k(b))$ is trivial). Moreover, there is a canonical isomorphism 
$$
\mathrm{T}_{[F]}M(c)\simeq\mathrm{Ext}^1_X(F,F).
$$
Combining this with the isomorphism \eqref{mor2} proves the claims.
\end{proof}

\bigskip
\section{Arbitrary fiber degree}\label{sec:arb}
In this section we denote by $k$ an algebraically closed field of characteristic zero. We will prove two results on Higgs sheaves on elliptic surfaces with section, which suggest a close relationship between Higgs sheaves on $X$ and pairs on $X$ with values in a suitable line bundle. The study of the moduli space of such pairs might be simpler then the study of the Higgs moduli space, and it might shed some light on the properties of the latter.
\subsection{Semistable bundles on elliptic curves}
Let us start by recalling some properties of semistable sheaves on genus one curves. Let $C$ be a non-singular, connected, complete curve of genus $1$. Then, for each positive integer $r$ there exists a unique (up to isomorphism) indecomposable locally-free sheaf $\mathbb{I}_r$ of rank $r$ and degree $0$ on $C$ such that $\mathrm{h}^0(C,\mathbb{I}_r)\neq 0$. Moreover $\mathbb{I}_r$ is (strictly) semistable and $\mathrm{S}$-equivalent to $\mathcal{O}_C^{\oplus r}$ (in particular $\det(\mathbb{I}_r)\simeq\mathcal{O}_C$), and $\mathrm{h}^0(C,\mathbb{I}_r)=1$.

Now let $(r,d)$ be a pair of integers with $r>0$, and set $h:=\textrm{gcd}(r,d)$, and
$$
r':=\frac{r}{h},\qquad d':=\frac{d}{h},
$$
so that $\textrm{gcd}(r',d')=1$. Then any semistable locally-free sheaf of rank $r$ and degree $d$ is $\mathrm{S}$-equivalent to a sheaf of the form
\begin{equation}
\label{she}
\bigoplus_i\mathbb{I}_{r_i}\otimes V_i,
\end{equation}
where each $V_i$ is a stable locally-free sheaf of rank $r'$ and degree $d'$, and the $r_i$'s are positive integers satisfying $\sum r_i=h$; assigning to the sheaf \eqref{she} the effective divisor of degree $h$
$$
\sum_i r_i[V_i]
$$
on the moduli space $\mathrm{M}(r',d')$ gives an isomorphism
$$
\mathrm{M}_C(r,d)\simeq\sym^h\mathrm{M}_C(r',d').
$$
Here we used the notation $\sym^hS:=S^h/\sym_h$ for a scheme $S$, where $\sym_h$ is the symmetric group on $h$ letters, acting on $S^h=S\times\cdots\times S$ by permuting the factors; this is the scheme parametrizing effective divisors of degree $h$ on $S$.

 In addition, each $V_i$ is uniquely determined by its determinant, which is an element of $\mathrm{Pic}^{d'}(C)$. The choice of an origin $\mathbf{o}\in C(k)$ determines an isomorphism
$$
C\xrightarrow{\simeq}\mathrm{Pic}^{d'}_{C/k},
$$
which on closed points reads as
$$
C(k)\to\mathrm{Pic}^{d'}(C),\qquad x\mapsto\mathcal{O}_C(x+(d'-1)\mathbf{o}),
$$
and thus, finally, an isomorphism
\begin{equation}
\label{mod}
\mathrm{M}_C(r,d)\simeq\sym^hC.
\end{equation}
 Sheaves corresponding via \eqref{mod} to reduced divisors on $C$ are actually isomorphic (not just $\mathrm{S}$-equivalent) to sheaves of the form \eqref{she}, with $r_i=1$ for each $i$, and the $V_i$'s pairwise non-isomorphic.

\subsection{Friedman's spectral covers}  Let $\pi:X\to B$ be a genus one fibration with a section $\sigma:B\to X$. Let $F$ be a torsion-free sheaf on $X$ such that its pull-back $F_\eta$ to the generic fiber of $\pi$ is semistable (i.e., such that its restriction $F_b$ to a closed fiber $X_b\hookrightarrow X$ of $\pi$ is semistable for $b\in B(k)$ general); e.g., $F$ might have rank $2$ and admit a Higgs field $\phi$ such that the Higgs sheaf $(F,\phi)$ is slope-semistable with respect to a $\mathrm{c}(F)$-suitable polarization on $X$ (cf. Proposition \ref{gen}). Thus, if we set
$$
r:=\mathrm{rk}(F),\qquad d:=\deg_\pi(F),\qquad h:=\textrm{gcd}(r,d),
$$
we obtain, for $b\in B(k)$ general, a closed point $[F_b]$ of the moduli space $\mathrm{M}_{X_b}(r,d)$, or, using the isomorphism
$$
\mathrm{M}_{X_b}(r,d)\simeq\sym^hX_b
$$
determined by the choice of origin $\sigma(b)\in X_b(k)$ in accordance with \eqref{mod}, an effective divisor of degree $h$ on $X_b$. In other words, $F$ determines a rational section $B\dashrightarrow \sym^h\pi$ of the structure morphism
$$
\sym^h\pi:=X\times_{\pi,B,\pi}\cdots\times_{\pi,B,\pi} X/\sym_h\to B
$$
of the relative $h$-th symmetric power of $\pi$. Since the base $B$ is regular and one-dimensional, and the scheme $\sym^h\pi$ complete, this extends to a global section
\begin{equation}
\label{sect}
\mathrm{s}_F:B\to\sym^h\pi.
\end{equation}
The $B$-scheme $\sym^h\pi$ is a fine moduli scheme parametrizing effective degree $h$ divisors on the fibers of $\pi$; so there is a relative universal effective divisor of degree $h$
$$
\mathcal{D}_h\hookrightarrow X\times_{B}\sym^h\pi,
$$
flat over $\sym^h\pi$. Using the section \eqref{sect} to base-change the finite, degree $h$ morphism
$$
\mathcal{D}_h\hookrightarrow X\times_{B}\sym^h\pi\to\sym^h\pi
$$
we obtain a cartesian square
\begin{center}
\begin{tikzcd}
\mathrm{D}_F\arrow{r}\arrow{d}& \mathcal{D}_h\arrow{d}\\
B\arrow{r}{\mathrm{s}_F}& \sym^h\pi
\end{tikzcd};
\end{center}
we call the morphism
$$
\mathrm{D}_F\to B
$$
the \emph{Friedman spectral cover} associated to $F$. The composite
$$
\mathrm{D}_F\to\mathcal{D}_h\hookrightarrow X\times_{\pi}\sym^h\pi\to X
$$
embeds $\mathrm{D}_F$ in $X$ as an effective divisor, which we call the \emph{Friedman spectral curve of $F$}.

\subsection{The results}
Let $\pi:X\to B$ be an elliptic surface. We will denote by $\mathcal{S}_\pi$ the saturation of $\pi^\ast\omega_B$ in $\Omega_X$, and by $(\Omega_\pi)_{\textrm{t.f.}}$ the quotient of $\Omega_\pi$ by its torsion subsheaf, so that there is an exact sequence
\begin{equation}
\label{satu}
0\to\mathcal{S}_\pi\to\Omega_X\to(\Omega_\pi)_{\textrm{t.f.}}\to 0.
\end{equation}
One sees easily that the sheaf $\mathcal{S}_\pi$ is invertible, and in fact there is an isomorphism
$$
\mathcal{S}_\pi\simeq\mathcal{O}_X(\mathrm{V}_\pi)\otimes\pi^\ast\omega_B,
$$
where $\mathrm{V}_\pi$ is an effective \emph{vertical}\footnote{A divisor on $X$ is vertical (with respect to $\pi$) if its support is a union of irreducible components of fibers of $\pi$.} divisor on $X$; in particular, the restriction of $\mathcal{S}_\pi$ to almost all closed fibers of $\pi$ is trivial. The next result generalizes Proposition 4.2 and Corollary 4.3 of \cite{primo} to the case of sheaves with arbitrary determinant.

\begin{prop}
\label{reduce}
Let $\pi:X\to B$ be a non-isotrivial elliptic surface with a section $\sigma:B\to X$, and let $F$ be a torsion-free sheaf of on $X$. Let us assume that:
\begin{enumerate}
\item the restriction of $F$ to a general closed fiber of $\pi$ is semistable;
\item the Friedman spectral curve $\mathrm{D}_F\hookrightarrow X$ of $F$ is reduced;
\item the schemes of singularities of $F$ and $(\Omega_\pi)_{\mathrm{t.f.}}$\footnote{The scheme of singularities of a torsion-free sheaf $G$ on $X$ is the support of the cokernel of the canonical monomorphism $G\to G^{\vee\vee}$.} are disjoint.
\end{enumerate}
Then the natural map
$$
\mathrm{Hom}(F,F\otimes\mathcal{S}_\pi)\to\mathrm{Hom}(F,F\otimes\Omega_X)
$$
is an isomorphism of $k$-vector spaces. Thus each $\Omega_X$-valued field $\phi:F\to F\otimes\Omega_X$ factors uniquely through $F\otimes\mathcal{S}_\pi\hookrightarrow F\otimes\Omega_X$, and it satisfies the integrability condition $\phi\wedge\phi=0$.

\end{prop}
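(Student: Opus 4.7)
The plan is to apply $\mathrm{Hom}(F, F \otimes -)$ to the sequence \eqref{satu} and reduce the claim to a vanishing statement on the geometric generic fiber. First, hypothesis (3) ensures that at every point of $X$ at least one of the sheaves $F$, $(\Omega_\pi)_{\mathrm{t.f.}}$ is locally free, hence flat; therefore $\mathrm{Tor}_1^{\mathcal{O}_X}(F, (\Omega_\pi)_{\mathrm{t.f.}})=0$ and the sheaf $F \otimes (\Omega_\pi)_{\mathrm{t.f.}}$ is torsion-free on the integral scheme $X$ (locally it is either $(\Omega_\pi)_{\mathrm{t.f.}}^{\oplus r}$ or $F$ tensored with an invertible sheaf). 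Tensoring \eqref{satu} with $F$ thus produces a short exact sequence, and applying $\mathrm{Hom}(F,-)$ shows that the natural map in the statement is injective with cokernel contained in $\mathrm{Hom}(F, F \otimes (\Omega_\pi)_{\mathrm{t.f.}})$. Because the target is torsion-free, it suffices to verify that the restriction $\alpha_{\bar\eta}$ of any morphism $\alpha: F \to F \otimes (\Omega_\pi)_{\mathrm{t.f.}}$ to the geometric generic fiber $X_{\bar\eta}$ vanishes.

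Next, I would analyse $X_{\bar\eta}$ using (1), (2) and the non-isotriviality of $\pi$. By Lemma \ref{lem}, $(\Omega_X)_{\bar\eta}$ is the non-split self-extension $\mathbb{I}_2$ of $\mathcal{O}_{X_{\bar\eta}}$, with distinguished subobject $\mathcal{S}_\pi|_{X_{\bar\eta}}\simeq\mathcal{O}_{X_{\bar\eta}}$. Reducedness of the Friedman spectral curve $D_F$, combined with the description of semistable sheaves on an elliptic curve recalled in \eqref{she} and \eqref{mod}, gives a decomposition $F_{\bar\eta}\simeq\bigoplus_{i=1}^h V_i$ with the $V_i$ pairwise non-isomorphic stable bundles of rank $r'=r/h$ and degree $d'=d/h$; here one uses that on an elliptic curve $\mathrm{Ext}^1(V_i,V_j)=0$ for distinct stable bundles of the same slope (Serre duality), so that the S-equivalence class associated to a reduced divisor contains only the polystable representative. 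Accordingly, $\alpha_{\bar\eta}$ breaks into components which are compositions of maps $\beta_{ij}\in\mathrm{Hom}(V_i,V_j\otimes\mathbb{I}_2)$ with the projection $V_j\otimes\mathbb{I}_2\twoheadrightarrow V_j$.

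A computation using the long exact sequence in $\mathrm{Hom}(V_i,-)$ of the extension $0\to V_j\to V_j\otimes\mathbb{I}_2\to V_j\to 0$ then shows that every such composition vanishes. For $i\neq j$, stability of $V_i,V_j$ forces $\mathrm{Hom}(V_i,V_j)=0$ and hence $\mathrm{Hom}(V_i,V_j\otimes\mathbb{I}_2)=0$. For $i=j$, the connecting map $\mathrm{Hom}(V_i,V_i)\to\mathrm{Ext}^1(V_i,V_i)$ is injective (the class of $V_i\otimes\mathbb{I}_2$ is the image of the non-zero class of $\mathbb{I}_2$ via the trace-splitting of $\mathcal{O}\hookrightarrow\mathit{End}(V_i)$, which exists in characteristic zero), so $\mathrm{Hom}(V_i,V_i\otimes\mathbb{I}_2)$ is generated by the inclusion $j_i:V_i\hookrightarrow V_i\otimes\mathbb{I}_2$, whose composition with the projection is tautologically zero. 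This yields $\alpha_{\bar\eta}=0$, hence $\alpha=0$, proving the isomorphism. The integrability assertion is then immediate: since $\mathcal{S}_\pi$ is invertible, the composition $\mathcal{S}_\pi\otimes\mathcal{S}_\pi\hookrightarrow\Omega_X\otimes\Omega_X\twoheadrightarrow\wedge^2\Omega_X$ vanishes (a local generator $s$ satisfies $s\wedge s=0$), and $\phi\wedge\phi$ factors through this composition.

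The main obstacle I expect is the second step: rigorously translating the global condition that $D_F\subset X$ is reduced into the explicit splitting of $F_{\bar\eta}$ into pairwise non-isomorphic stable summands, and checking that no non-trivial extensions can obstruct the polystability of the generic-fiber restriction. Once this decomposition is in hand, everything else reduces to the elementary computation on the elliptic curve $X_{\bar\eta}$.
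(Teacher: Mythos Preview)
Your proposal is correct and follows essentially the same route as the paper: both tensor the sequence \eqref{satu} with $\mathit{End}(F)$, use hypothesis~(3) to ensure torsion-freeness, and reduce to a fiber, where the reduced spectral cover yields $F|_{\mathrm{fiber}}\simeq\bigoplus_{i=1}^h V_i$ with the $V_i$ pairwise non-isomorphic stable bundles and non-isotriviality gives $\Omega_X|_{\mathrm{fiber}}\simeq\mathbb{I}_2$. The paper packages the fiber step as a rank comparison (both $\pi_\ast(\mathit{End}(F)\otimes\mathcal{S}_\pi)$ and $\pi_\ast(\mathit{End}(F)\otimes\Omega_X)$ are locally free of rank $h$, forcing the injection between them to be an isomorphism), whereas you compute the projection $\mathrm{Hom}(V_i,V_j\otimes\mathbb{I}_2)\to\mathrm{Hom}(V_i,V_j)$ explicitly; one small slip to fix is the phrase ``any morphism $\alpha:F\to F\otimes(\Omega_\pi)_{\mathrm{t.f.}}$'', since an arbitrary such $\alpha$ need not vanish on $X_{\bar\eta}$ --- what you actually (and correctly) argue is that every $\alpha$ in the \emph{image} of $\mathrm{Hom}(F,F\otimes\Omega_X)$ does.
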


\begin{proof}
First of all, one sees easily that the sequence
\begin{equation}
\label{tens}
0\to \mathit{End}(F)\otimes \mathcal{S}_\pi\to \mathit{End}(F)\otimes \Omega_X\to \mathit{End}(F)\otimes (\Omega_\pi)_{\textrm{t.f.}}\to 0,
\end{equation}
obtained by tensoring \eqref{satu} with the sheaf $\mathit{End}(F)$, is exact. The sheaves $\mathit{End}(F)\otimes \mathcal{S}_\pi$ and $\mathit{End}(F)\otimes \Omega_X$ are torsion-free, since they are tensor products of the torsion-free sheaf $\mathit{End}(F)$ with locally free ones (recall that $\mathcal{S}_\pi$ is invertible), and assumption $(3)$ implies that $\mathit{End}(F)\otimes (\Omega_\pi)_{\textrm{t.f.}}$ is torsion-free too. Thus, applying the functor $\pi_\ast$ to \eqref{tens}, we obtain an exact sequence of locally free sheaves on $B$
\begin{equation}
\label{on b}
0\to \pi_\ast(\mathit{End}(F)\otimes \mathcal{S}_\pi)\to \pi_\ast(\mathit{End}(F)\otimes \Omega_X)\to \pi_\ast(\mathit{End}(F)\otimes (\Omega_\pi)_{\textrm{t.f.}}).
\end{equation}
The restrictions of the sheaves in \eqref{tens} to a general closed fiber $C=X_b$ of $\pi$ are isomorphic either to $\mathit{End}(V)$, or to $\mathit{End}(V)\otimes\mathbb{I}_2$, for some semistable locally free sheaf $V$ on $C$ of rank $r=\mathrm{rk}(F)$ and degree $d:=\deg_\pi(F)$ (recall that $\mathbb{I}_2$ is the unique non-split self-extension of the structure sheaf of the elliptic curve $C$). Moreover, by  assumption $(2)$, there is an isomorphism
$$
V\simeq\bigoplus_{i=1}^hV_i
$$
where $h:=\textrm{gcd}(r,d)$, and the $V_i$'s are pairwise non-isomorphic stable locally free sheaves of rank $r/h$ and degree $d/h$. Thus
\begin{eqnarray*}
\mathrm{h}^0(C,\mathit{End}(V))=\sum_{i,j=1}^h\mathrm{hom}(V_i,V_j)=\sum_{i,j=1}^h\delta_{ij}=h,\\
\mathrm{h}^0(C,\mathit{End}(V)\otimes\mathbb{I}_2)=\sum_{i,j=1}^h\mathrm{hom}(V_i,V_j\otimes\mathbb{I}_2)=\sum_{i,j=1}^h\delta_{ij}=h,
\end{eqnarray*}
from which it follows that the non-zero sheaves in \eqref{on b} all have the same rank $h$, and so the map
$$
\pi_\ast(\mathit{End}(F)\otimes \mathcal{S}_\pi)\to\pi_\ast(\mathit{End}(F)\otimes \Omega_X)
$$
is an isomorphism. The induced induced map on global sections is thus an isomorphism
$$
\mathrm{Hom}(F,F\otimes\mathcal{S}_\pi)\xrightarrow{\simeq}\mathrm{Hom}(F,F\otimes\Omega_X),
$$
as claimed.
\end{proof}

	{
	
Next, we will prove that a result similar to Proposition \ref{reduce} holds for rank two sheaves also in the case of non-reduced spectral covers, if one assumes in addition the restriction of the sheaf to a general closed   fiber to be \emph{regular}. }
	\begin{prop}
\label{reduced}
Let $\pi:X\to B$ be a non-isotrivial elliptic surface with a section $\sigma:B\to X$, and let $F$ be a torsion-free Higgs sheaf of rank $2$ on $X$. Let us assume that:
\begin{enumerate}
\item the restriction of $F$ to a general closed fiber of $\pi$ is semistable and regular;
\item the Friedman spectral curve $\mathrm{D}_F\hookrightarrow X$ of $F$ is non-reduced;
\item the schemes of singularities of $F$ and $(\Omega_\pi)_{\mathrm{t.f.}}$ are disjoint.
\end{enumerate}
Then the natural map
$$
\mathrm{Hom}(F,F\otimes\mathcal{S}_\pi)\to\mathrm{Hom}(F,F\otimes\Omega_X)
$$
maps $\mathrm{Hom}(F,F\otimes\mathcal{S}_\pi)$ bijectively onto the set of $\Omega_X$-valued fields $\phi:F\to F\otimes\Omega_X$ on $F$ satisfying the integrability condition $\phi\wedge\phi=0$.
\end{prop}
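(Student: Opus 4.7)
The map $\mathrm{Hom}(F,F\otimes\mathcal{S}_\pi)\to\mathrm{Hom}(F,F\otimes\Omega_X)$ is injective (as $\mathcal{S}_\pi\hookrightarrow\Omega_X$ is an inclusion of line bundles) and lands in the integrable locus (as $\wedge^2\mathcal{S}_\pi=0$). The content of the statement is therefore the converse: given $\phi:F\to F\otimes\Omega_X$ with $\phi\wedge\phi=0$, one must show that the composition
$$
\bar\phi:F\xrightarrow{\phi}F\otimes\Omega_X\twoheadrightarrow F\otimes(\Omega_\pi)_{\textrm{t.f.}}
$$
vanishes. By hypothesis $(3)$, $F\otimes(\Omega_\pi)_{\textrm{t.f.}}$ is torsion-free, so $\bar\phi$ is determined by its restriction to a general closed fiber $C=X_b$, and it suffices to prove $\bar\phi|_C=0$.

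On such a $C$, hypotheses $(1)$--$(2)$ together with the classification of semistable rank-$2$ bundles on an elliptic curve give $V:=F|_C\cong V_0\otimes\mathbb{I}_2$ for a line bundle $V_0$, while Lemma \ref{lem} (extended from the generic fiber to a general closed fiber) gives $\Omega_X|_C\cong\mathbb{I}_2$, sitting in the non-split extension $0\to\mathcal{O}_C\to\mathbb{I}_2\to\mathcal{O}_C\to 0$ whose quotient identifies with $(\Omega_\pi)_{\textrm{t.f.}}|_C\cong\mathcal{O}_C$. The trace splitting (valid since $\mathrm{char}(k)=0$) together with Atiyah's identity $\mathbb{I}_2\otimes\mathbb{I}_2\cong\mathcal{O}_C\oplus\mathbb{I}_3$ gives $\mathit{End}(V)=\mathcal{O}_C\oplus\mathit{End}_0(V)$ with $\mathit{End}_0(V)\cong\mathbb{I}_3$. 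Decomposing $\phi|_C=\tfrac{1}{2}\mathrm{tr}(\phi|_C)\cdot\mathrm{id}_V+\gamma$ with $\gamma\in\mathrm{H}^0(\mathbb{I}_3\otimes\mathbb{I}_2)\cong k^2$: the scalar summand commutes with $\gamma$ and with itself, so $\phi|_C\wedge\phi|_C=\gamma\wedge\gamma$ in $\mathrm{H}^0(\mathbb{I}_3)\cong k$; and the scalar summand contributes zero to $\bar\phi|_C$ because the induced map $\mathrm{H}^0(\mathbb{I}_2)\to\mathrm{H}^0(\mathcal{O}_C)$ is zero (the non-split extension makes the connecting homomorphism $\mathrm{H}^0(\mathcal{O}_C)\to\mathrm{H}^1(\mathcal{O}_C)$ an isomorphism). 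Therefore $\bar\phi|_C=L(\gamma)\in\mathrm{H}^0(\mathbb{I}_3)$, where $L$ is the map induced by $\mathbb{I}_2\to\mathcal{O}_C$, and the fiberwise integrability condition reads $Q(\gamma):=\gamma\wedge\gamma=0$ in $\mathrm{H}^0(\mathbb{I}_3)$.

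The main obstacle, and the heart of the proof, is the quadratic identity $Q=c\cdot L^2$ with $c\in k^\times$ on $\mathrm{H}^0(\mathbb{I}_3\otimes\mathbb{I}_2)\cong k^2$, which combined with $\mathrm{char}(k)=0$ forces $Q(\gamma)=0\Leftrightarrow L(\gamma)=0$, yielding $\bar\phi|_C=0$. The plan to prove it is to choose a basis $\{\gamma_0,\gamma_1\}$ of $\mathrm{H}^0(\mathbb{I}_3\otimes\mathbb{I}_2)$ with $\gamma_0:=e_N\otimes\alpha$ spanning $\ker L$; here $e_N\in\mathrm{H}^0(\mathit{End}_0(V))$ is a generator, realized as the nilpotent endomorphism with image the sub line bundle of $V=V_0\otimes\mathbb{I}_2$, and $\alpha\in\mathrm{H}^0(\mathbb{I}_2)$ is a generator, realized as the image of $1\in\mathrm{H}^0(\mathcal{O}_C)$ under the sub inclusion. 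Then, in a local frame $(f_1,f_2)$ of $\mathbb{I}_2$ with $f_1=\alpha$ and $f_2$ a local lift of the quotient generator, one verifies: (i) $Q(\gamma_0)=0$, immediate from $\alpha\wedge\alpha=0$ in $\wedge^2\mathbb{I}_2$; (ii) the bilinear cross term $\gamma_0\wedge\gamma_1+\gamma_1\wedge\gamma_0$ vanishes, because $L(\gamma_1)\neq 0$ globally pins the $f_2$-component of $\gamma_1$ down to the constant multiple $L(\gamma_1)=c\,e_N$, and a commutator computation using $\gamma_0=e_N\otimes f_1$ then yields zero; and (iii) $Q(\gamma_1)\neq 0$, by contradiction: if it vanished, the $f_1$-component of $\gamma_1$ would pointwise centralize $e_N$, and hence, by gluing via the nowhere-vanishing of $e_N$ as a section of $\mathbb{I}_3$, be a constant $\mathcal{O}_C$-multiple of $e_N$, forcing $\gamma_1=e_N\otimes\omega$ for some $\omega\in\mathrm{H}^0(\mathbb{I}_2)=k\alpha$, which contradicts $L(\gamma_1)\neq 0$. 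Combining (i)--(iii) gives $Q(\lambda\gamma_0+\mu\gamma_1)=\mu^2 Q(\gamma_1)$ and $L(\lambda\gamma_0+\mu\gamma_1)=\mu L(\gamma_1)$, whence $Q=[Q(\gamma_1)/L(\gamma_1)^2]\cdot L^2$ with nonzero coefficient.
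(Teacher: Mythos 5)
Your proposal is correct, and at bottom it is the paper's own argument in different packaging. Both proofs reduce, via hypothesis (3) and torsion-freeness of $F\otimes(\Omega_\pi)_{\mathrm{t.f.}}$, to showing the induced field vanishes on a general closed fiber, where $F_b\simeq\lambda\otimes\mathbb{I}_2$ and $(\Omega_X)_b\simeq\mathbb{I}_2$; the paper then writes the restricted field in the local frames $(x,y_U)$, shows its $y$-component is a constant multiple $\tau\theta$ of the nilpotent, and uses integrability together with the fact that the extension cocycle $(f_{UV})$ is not a coboundary to force $\tau=0$. You split off the trace first and recast the same local commutator computation as the identity $Q=c\,L^{2}$, $c\neq 0$, on the two-dimensional space $\mathrm{H}^0(\mathit{End}_0(F_b)\otimes\mathbb{I}_2)$; your steps (i)--(iii) correspond exactly to the paper's computations (vanishing of $Q$ on $\ker L$, vanishing of the cross term, and the key non-vanishing $Q(\gamma_1)\neq 0$), except that in (iii) you close the contradiction with the equivalent fact that $\mathrm{H}^0(\mathbb{I}_2)\to\mathrm{H}^0(\mathcal{O}_C)$ is zero rather than by exhibiting $(f_{UV})$ as a coboundary; what this buys is a basis-level explanation ($Q$ is a rank-one quadratic form proportional to $L^{2}$) of why integrability forces verticality. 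One wording caution in (iii): the $f_1$-component of $\gamma_1$ is \emph{not} a constant multiple of $e_N$ --- its local multipliers $g_U$ satisfy $g_V-g_U=-c f_{UV}$ and do not glue; what is true, and all you need, is that it is an $\mathcal{O}_C$-multiple of $e_N$, so that $\gamma_1|_U=e_N\otimes\omega_U$ with $\omega_U=g_U f_1+c f_{2,U}$, and these $\omega_U$ glue (since $\gamma_1$ is global and $e_N\otimes-$ is injective, $e_N$ being nowhere vanishing) to a global $\omega\in\mathrm{H}^0(\mathbb{I}_2)=k\alpha$, whence $L(\gamma_1)=e_N\cdot q(\omega)=0$, the desired contradiction. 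Finally, you implicitly assume a $\gamma_1$ with $L(\gamma_1)\neq 0$ exists; this is harmless, since if $L$ vanished identically on $\mathrm{H}^0(\mathit{End}_0\otimes\mathbb{I}_2)$ there would be nothing left to prove.
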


\begin{proof}
Tensoring \eqref{satu} by $F$ and then applying $\mathrm{Hom}(F,-)$ we get an exact sequence
$$
0\to \mathrm{Hom}(F,F\otimes\mathcal{S}_\pi)\to\mathrm{Hom}(F,F\otimes\Omega_X)\to \mathrm{Hom}(F,F\otimes(\Omega_\pi)_{\mathrm{t.f.}}).
$$
Thus it will be enough to show that for each Higgs field $\phi:F\to F\otimes\Omega_X$, the composition
\begin{equation}
\label{com}
F\xrightarrow{\phi} F\otimes\Omega_X\to F\otimes(\Omega_\pi)_{\mathrm{t.f.}}
\end{equation}
vanishes. To this end, it is sufficient to prove that the restriction of \eqref{com}  to a general closed fiber of $\pi$ is zero. So let $E=X_b$ be such a fiber, and let
\begin{equation}
\label{exte}
\mathcal{E}:\qquad 0\to\mathcal{O}_E\xrightarrow{j}\mathbb{I}\xrightarrow{q}\mathcal{O}_E\to 0
\end{equation}
be the unique non-split self-extension of $\mathcal{O}_E$. Then there are isomorphisms
$$
F_b\simeq \mathbb{I}\otimes \lambda,\qquad{(\Omega_X)}_b\simeq\mathbb{I},\qquad {((\Omega_\pi)_{\mathrm{t.f.}})}_b\simeq\mathcal{O}_E
$$
for some $\lambda\in\pic(E)$; so the Higgs field $\phi$ restricts on $E$ to an element
$$
\psi\in\mathrm{Hom}(\mathbb{I}\otimes\lambda,\mathbb{I}\otimes\lambda\otimes\mathbb{I})\simeq\mathrm{Hom}(\mathbb{I},\mathbb{I}\otimes\mathbb{I})
$$
satisfying
$$
\psi\wedge\psi=0\in\mathrm{Hom}(\mathbb{I},\mathbb{I}\otimes\wedge^2\mathbb{I}).
$$
Let
$$
x:=j(1)\in\Gamma(E,\mathbb{I}).
$$
We will show that $\psi$ can be written as $\psi=\alpha\otimes x$ for some $\alpha\in\mathrm{End}(\mathbb{I})$, from which it will follow that
$$
q_\ast\psi=\alpha\otimes q(x)=\alpha\otimes q(j(1))=0.
$$
To this end, let $\mathcal{U}=\{U,V,\dots\}$ be an affine open cover of $E$. Then for each $U\in\mathcal{U}$ the restriction of the extension \eqref{exte} to $U$ splits, so there exists a section $y_U\in\Gamma(U,\mathbb{I})$ such that 
$$
q(y_U)=1\in\Gamma(U,\mathcal{O}_E).
$$
The pair $(x,y_U)$ is a local frame for $\mathbb{I}$ on $U$, and on an overlap $U\cap V$ one has the equality
\begin{equation}
\label{transit}
y_V=y_U+f_{UV}x
\end{equation}
for some $f_{UV}\in\Gamma(U\cap V,\mathcal{O}_E)$; the family $(f_{UV})_{(U,V)\in\mathcal{U}^2}$ is an $\mathcal{O}_E$-valued Cech $1$-cocycle on $E$, whose image in $\mathrm{H}^1(\mathcal{O}_E)\simeq\mathrm{Ext}^1(\mathcal{O}_E,\mathcal{O}_E)$ is the class of the extension \eqref{exte}, hence is non-zero.

The restriction $\psi_U$ of $\psi$ to $U\in\mathcal{U}$ can be written as
$$
\psi_U=\alpha_U\otimes x+\beta_U\otimes y_U
$$
for suitable $\alpha_U,\beta_U\in\Gamma(U,\mathit{End}(\mathbb{I}))$; on an overlap $U\cap V$ one has, using \eqref{transit},
$$
\psi_V=\alpha_V\otimes x+\beta_V\otimes(y_U+f_{UV}x)=(\alpha_V+f_{UV}\beta_V)\otimes x+\beta_V\otimes y_U,
$$
and this equals $\psi_U$ if and only if
$$
\beta_V=\beta_U,\qquad \alpha_V-\alpha_U=-f_{UV}\beta_V.
$$
Thus the $\beta$'s patch, say $\beta_U=\beta|_U$ for some $\beta\in\mathrm{End}(\mathbb{I})$, and the $\alpha$'s satisfy
\begin{equation}
\label{overl}
\alpha_V-\alpha_U=-f_{UV}\beta.
\end{equation}
Taking the trace in \eqref{overl} one gets
$$
\mathrm{tr}(\alpha_V)-\mathrm{tr}(\alpha_U)=-\mathrm{tr}(\beta)f_{UV},
$$
showing that $\mathrm{tr}(\beta)=0\in\mathrm{H}^0(\mathcal{O}_E)\simeq k$ (otherwise $(f_{UV})$ would be the boundary of the $0$-cocycle $(-\mathrm{tr}(\beta)^{-1}\mathrm{tr}(\alpha_U))_{U\in\mathcal{U}}$). Now, it is well known that the $k$-algebra map
$$
k[t]\to\mathrm{End}(\mathbb{I}),\qquad t\mapsto \theta:=j\circ q
$$
induces an isomorphism of $k$-algebras
$$
k[t]/(t^2)\xrightarrow{\simeq}\mathrm{End}(\mathbb{I});
$$
in particular, $\mathrm{End}(\mathbb{I})$ is $2$-dimensional over $k$, with $\mathrm{End}(\mathbb{I})_0:=\ker(\mathrm{tr}:\mathrm{End}(\mathbb{I})\to k)=k\theta$. It follows that $\beta$ is a multiple of $\theta$, say
$$
\beta=\tau\theta
$$
for some $\tau\in k$. We claim that $\tau=0$; if not, then using the condition $\psi\wedge\psi=0$ one would have, for each $U\in\mathcal{U}$,
$$
0=\psi_U\wedge\psi_U=[\alpha_U,\beta]\otimes \left(x\wedge y_U\right),
$$
(here $[\alpha_U,\beta]=\alpha_U\circ\beta-\beta\circ\alpha_U$ is the commutator of the endomorphisms $\alpha_U$ and $\beta$), that is,
$$
0=[\alpha_U,\beta]=[\alpha_U,\tau\theta]=\tau[\alpha_U,\theta],
$$
and thus
\begin{equation}
\label{commut}
[\alpha_U,\theta]=0.
\end{equation}
Let $T$ be the matrix of $\theta$ with respect to $(x,y_U)$, that is $T=\begin{pmatrix}
0 & 1\\
0 & 0
\end{pmatrix}$,  and write $A_U=\begin{pmatrix}
 a_U & b_U\\
 c_U & d_U
\end{pmatrix}$ ($a_U,\dots,d_U\in\Gamma(U,\mathcal{O}_E)$) for that of $\alpha_U$. Then \eqref{commut} is equivalent to $[A_U,T]=0$ (commutator of matrices), which gives
$$
c_U=0,\qquad a_U=d_U,
$$
or, equivalently,
$$
\alpha_U=a_U+b_U\theta.
$$
Substituting this in \eqref{overl} one gets
$$
(a_V-a_U)+(b_V-b_U)\theta=-f_{UV}\tau\theta;
$$
in particular, one has $b_V-b_U=-f_{UV}\tau$, from which it follows that $(f_{UV})$ is the boundary of the $0$-cocycle $(-\tau^{-1}b_U)_{U\in\mathcal{U}}$, a contradiction. Thus $\beta=0$, which, substituted in \eqref{overl}, gives $\alpha_V=\alpha_U$ for each pair $(U,V)\in\mathcal{U}^2$, showing that the $\alpha$'s patch, say $\alpha_U=\alpha|_U$ for some $\alpha\in\mathrm{End}(\mathbb{I})$. In conclusion, one has $\psi=\alpha\otimes x$, as claimed.
\end{proof}

\bigskip

\frenchspacing

 \end{document}